\newtheorem{dfn}{Definition}[section]
\newtheorem{thm}[dfn]{Theorem}
\newtheorem{lem}[dfn]{Lemma}
\newtheorem{cor}[dfn]{Corollary}
\def\CM{\mathcal M}
\def\CN{\mathcal N}
\def\PM{\mathcal {PM}}
\def\PN{\mathcal {PN}}
\begin{document}

\title[Generating mapping class groups by involutions]{Generating the mapping class group of a non-orientable punctured surface by
    involutions}

\author{Kazuya Yoshihara}
\date{}

\begin{abstract}
    Let $N_{g,n}$ denote the closed non-orientable surface of genus $g$ with $n$ punctures and let $\CN_{g,n}$ denote the mapping class group of $N_{g,n}$.

    Szepietowski showed that $\CN_{g,n}$ is generated by finitely many involutions. The number of elements in his generating set depends linearly on $g$ and $n$. In the case of $n=0$, Szepietowski found an involution generating set in such a way that the number of its elements does not depend on $g$, showing that $\CN_{g,0}$ is generated by four involutions.
    In this thesis, for $n \geq 0$, we prove that $\CN_{g,n}$ is generated by eight involutions if $g \geq 13$ is odd and by eleven involutions if $g \geq 14$ is even.

\end{abstract}
\maketitle

\section{Introduction}
\label{section:one}
For $n \geq 0$, let $\Sigma _{g,n}$ (resp.\ $N_{g,n}$) denote the closed connected orientable (resp. non-orientable) surface of genus $g$ with arbitrarily chosen $n$ distinct points which we call {\it{punctures}}. The {\it{mapping class group}} $\CM_{g,n}$ (resp. $\CN_{g,n}$) is the group of isotopy classes of orientation preserving diffeomorphisms (resp.\ diffeomorphisms) of $\Sigma _{g,n}$ (resp.\ $N_{g,n}$) which preserve the set of punctures. Denote by $\PM_{g,n}$ (resp.\ $\PN_{g,n}$) the subgroup of $ \CM _{g,n}$ (resp.\ $ \CN_{g,n}$) consisting of the isotopy classes of diffeomorphisms which fix each puncture.

In the orientable case, Dehn \cite{dehn} and Lickorish \cite{li1} first proved that $\CM_{g,0}$ is generated by Dehn twists. Lickorish \cite{li2} showed that certain $3g-1$ Dehn twists generate $\CM_{g,0}$ for $g \geq 1$. This number was improved to be $2g+1$ by Humphries \cite{hum} for $g \geq 3$. Moreover, Humphries showed that $\CM_{g,0}$ cannot be generated by $2g$ (or less) Dehn twists for any $g \geq 2$.

It has been extensively studied the problem of finding torsion generators for finite groups and mapping class groups. This study for $\CM_{g,n}$ was started by Maclachlan \cite{mac}. He proved that $\CM_{g,0}$ is generated by torsion elements and used this result to show that the moduli space of Riemann surfaces of genus $g$ is simply connected as a topology space. Patterson \cite{patter} showed that $\CM_{g,n}$ is generated by torsion elements for $g \geq 3$ and $n\geq1$. Korkmaz \cite{ko2} showed that $\CM_{g,n}$ is generated by two elements of order $4g+2$ for $g \geq 3$ and $n=0,1$. McCarthy and Papadopoulos \cite{mc_pa} proved that $\CM_{g,0}$ is generated by infinitely many conjugates of a certain involution. Luo \cite{luo} showed that $\CM_{g,n}$ is generated by $12g+1$ involutions for $g \geq 3,n \leq1$. In his paper, Luo asked the following question:
\textit{Is there a universal upper bound which is independent of $g$ and $n$ for the number of torsion elements necessary to generate $\CM_{g,n}$}?
Brendle and Farb \cite{b_f} gave a positive answer to Luo's question for $n=0,1$. They found a generating set for $\CM_{g,n}$ which consists of six involutions. Moreover, they showed that $\CM_{g,n}$ can be realized as a quotient of a Coxeter group on six generators. For every $n \geq 0$, Kassabov \cite{kass} proved that $\CM_{g,n}$ is generated by four (resp.\ five or six) involutions if $g \geq 8$ (resp.\ if $g \geq 6$ or if $g \geq 4$). Monden \cite{mon1} proved that $\CM_{g,n}$ is generated by four (resp.\ five) involutions if $g \geq 7$ (resp.\ if $g \geq 5$). Korkmaz\cite{ko3} improved this result for $n=0$. He proved that $\CM_{g,0}$ is generated by three (resp.\ four) involutions if $g \geq 8$ (resp.\ if $g \geq 3$). Yildiz\cite{y} proved that $\CM_{g,0}$ is generated by three involutions if $g \geq 6$.

In the non-orientable case, Lickorish \cite{li3} first proved that $\CN_{g,0}$ is generated by Dehn twists and Y-homeomorphisms. Chillingworth \cite{chil} found a finite set of generators of this group. Korkmaz \cite{ko1} found finite generating sets for the groups $\CN_{g,n}$ and $\PN_{g,n}$. The number of Chillingworth's generators is improved to $g+1$ by Szepietowski \cite{sez3}. Hirose \cite{hirose} proved that his generating set is the minimal generating set by Dehn twists and Y-homemorphisms.
Szepietowski \cite{sez1} proved that $\CN_{g,n}$ is generated by involutions. The cardinality of his generating set of involutions depends linearly on $g$ and $n$. We can consider Luo's problem for $\CN_{g,n}$: {\textit{Is there a universal upper bound which is independent of $g$ and $n$ for the number of torsion elements necessary to generate $\CN_{g,n}$}?}  In the case $n=0$, Szepietowski gave a positive answer and found four involutions which generate $\CN_{g,0}$ for $g \geq 4$ \cite{sez2}. Altunoz, Pamuk, and Yildiz\cite{a_p_y} proved that $\CN_{g,0}$ is generated by three involutions if $g \geq 26$. But, in the case $n \neq 0$, it is not known. We will gave a positive answer for this problem.

\begin{thm}
    \label{thm:main}
    Let $n$ be a non-negative integer.
    Then, for $g$ odd with $g \geq 13$, $\CN_{g,n}$ is generated by eight involutions.
    For $g$ even with $g \geq 14$, $\CN_{g,n}$ is generated by eleven involutions.
\end{thm}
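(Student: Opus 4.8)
The plan is to reduce the problem to known results in three stages. First, I would establish that $\CN_{g,n}$ is generated by $\PN_{g,n}$ together with a small number of involutions realizing (via their induced permutations) a generating set of the symmetric group $S_n$; since $S_n$ is generated by two involutions (for instance a product of disjoint transpositions and a ``reversal'' permutation), I expect to need only one or two extra involutions supported near the punctures to handle the action on the puncture set. This is a standard short-exact-sequence argument $1\to\PN_{g,n}\to\CN_{g,n}\to S_n\to1$, where one must lift the chosen generators of $S_n$ to genuine involutions of $N_{g,n}$ — easy, because one can realize a permutation of punctures lying in a disk by a rotation that is an honest involution of the surface.

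Second, and this is the technical heart, I would show that $\PN_{g,n}$ is generated by a bounded number of involutions, independent of $n$. The strategy is to use the capping/forgetful homomorphisms relating $\PN_{g,n}$ to $\PN_{g,0}=\CN_{g,0}$, and to build the generators of $\PN_{g,n}$ out of: (i) the four involutions of Szepietowski that generate $\CN_{g,0}$, pulled back to generators supported away from the punctures, (ii) a handful of additional involutions, each being the composition of a ``point-pushing'' map along a curve with a reflection, chosen so that the point-pushing subgroup $\pi_1$ of the punctured surface is generated. Concretely, point-pushing along a two-sided simple loop is a product of two Dehn twists $T_a T_b^{-1}$, and such a map, after composing with a suitable involution $\iota$ of the whole surface conjugating $a$ to $b$, becomes an involution; arranging finitely many such loops whose classes generate the relevant surface group — together with Y-homeomorphisms handled similarly — should give $\PN_{g,n}$ from a fixed-size set. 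The parity restrictions ($g$ odd vs even) and the genus bounds ($g\ge 13$, $g\ge 14$) will come from needing enough ``room'' in $N_{g,0}$ to realize these conjugating involutions, mirroring the orientable-case constraints in Kassabov, Monden, and Korkmaz.

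Third, I would assemble the count: roughly four involutions inherited from $\CN_{g,0}$, one or two for the $S_n$-action, and the remainder (two to three in the odd case, more in the even case, reflecting that $\CN_{g,0}$ itself is harder to generate by few involutions when $g$ is even) for the point-pushing part — totalling eight for odd $g$ and eleven for even $g$. Throughout, the bookkeeping is: whenever a generator is not already an involution, replace it by (or express it as a product involving) a conjugate-by-an-involution trick, using that if $\iota^2=1$ and $\iota f\iota^{-1}=f^{-1}$ then $f\iota$ is an involution, and that a product of commuting involutions is an involution.

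I expect the main obstacle to be the second stage: controlling the generation of the point-pushing subgroup and the interaction between point-pushing maps and the $\CN_{g,0}$-generators uniformly in $n$, while keeping every chosen map an honest involution. In particular, showing that finitely many point-pushing involutions (with $n$-independent count) suffice requires the loops to be chosen so that pushing the $n$ punctures around them, together with the ambient mapping classes, generates the full kernel — this is the place where the genus lower bounds really bite, since one needs sufficiently many disjoint one-handles and cross-caps to host the symmetric configurations of curves on which the conjugating involutions act. The orientable-case analogue of this step (Kassabov, Monden) is already delicate, and the non-orientable setting adds the complication of Y-homeomorphisms and of the more rigid structure of involutions on $N_{g,0}$.
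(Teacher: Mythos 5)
Your overall architecture --- the exact sequence $1\to\PN_{g,n}\to\CN_{g,n}\to S_n\to 1$, generating the kernel by involutions, and the trick that $f\iota$ is an involution whenever $\iota^2=1$ and $\iota f\iota^{-1}=f^{-1}$ --- is exactly the skeleton of the paper's proof. But there are two concrete problems. First, your Stage 1 claim that ``$S_n$ is generated by two involutions'' is false for $n\ge 4$: any group generated by two involutions is a quotient of $\mathbb{Z}/2 * \mathbb{Z}/2$ and hence dihedral, so $S_n$ ($n\ge4$) needs at least three involutions. This is not fatal to the count only because involutions can do double duty; the paper uses the images of three of its surface involutions ($\sigma$, $\tau$, $W$) and checks separately that these three permutations generate $S_n$. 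Second, your Stage 2 treats the kernel of the forgetful map as generated by point-pushes along two-sided loops (products $t_a t_b^{-1}$) plus Y-homeomorphisms, but on a non-orientable surface $\pi_1$ is generated by one-sided loops, and pushing a puncture along a one-sided loop is a \emph{puncture slide}, which is neither a product of two Dehn twists nor a crosscap slide. Your sketch never produces the puncture slides, and Korkmaz's theorem shows they are genuinely needed to generate $\PN_{g,n}$ ($n$ of them, $2n$ in the even-genus case). You also do not explain how the point-pushing count stays independent of $n$; that requires a symmetry of the surface cyclically permuting the punctures, which you never construct.

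The paper avoids the Birman-exact-sequence/point-pushing route entirely. It takes Korkmaz's explicit finite generating set for $\PN_{g,n}$ (a Humphries-reduced list of Dehn twists $t_l$, $l\in\Lambda'$, the puncture slides $v_i$ and, for even $g$, $w_i$, and one crosscap slide $y$), builds two reflections $\sigma,\tau$ of a symmetric model so that $R=\tau\sigma$ acts as a ``rotation'' carrying $a_i\mapsto a_{i+1}$, $b_i\mapsto b_{i+1}$, $c_i\mapsto c_{i+1}$, $\alpha_i\mapsto\alpha_{i+1}$, and then needs only one seed in each orbit: $t_{a_1}=\tau\rho_1$, $v_1=\tau\rho_2$ (resp.\ $K\rho_2$), $y=W\rho_3$, with the remaining families ($b$'s, $c$'s, $d$'s, $e$'s) reached by conjugating with the additional involutions $I$ and $J$. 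Everything is then assembled by the same subgroup-containing-kernel-and-surjecting lemma you propose. If you want to salvage your version, you must (i) add involutions conjugating each puncture slide to its inverse, (ii) replace the two-involution generation of $S_n$ by three, and (iii) exhibit the cyclic symmetry on punctures; at that point you have essentially reconstructed the paper's argument.
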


The paper is organized as follows.
In Section $2$ we recall the properties of Dehn twists, Y-homeomorphisms and puncture slides. In Section $3$, we construct involutions of $\CN_{g,n}$ and prove the theorem~\ref{thm:main}.

\section{Preliminaries}
\label{section:two}
Let $N_{g,n}$ be the closed non-orientable surface of genus $g$ with $n$ punctures and let $\Delta$ be the set of punctures of $N_{g,n}$. We represent the surface $N_{g,n}$ as a connected sum of an orientable surface and one or two projective planes (one for $g$ odd and two for $g$ even). In Figures.~\ref{fig:surface_odd} and ~\ref{fig:surface_even}, each encircled cross mark represents a crosscap: the interior of the encircled disk is to be removed and each pair of antipodal points on the boundary are to be identified.

\begin{figure}[ht]
    \begin{center}
        \includegraphics[scale=1.0,clip]{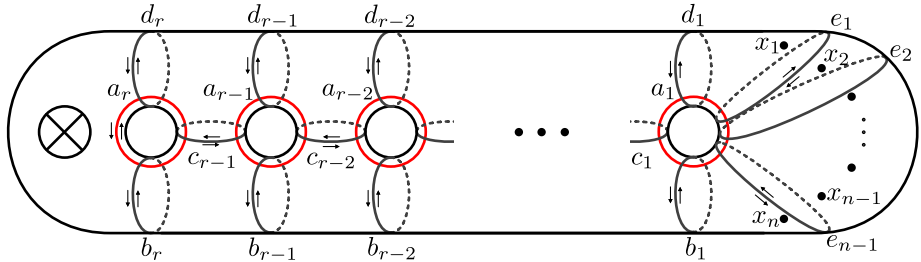}
    \end{center}
    \caption{Surface $N_{g,n}$ for $g=2r+1$ and its simple closed curves}
    \label{fig:surface_odd}
\end{figure}

\begin{figure}[ht]
    \begin{center}
        \includegraphics[scale=1.0,clip]{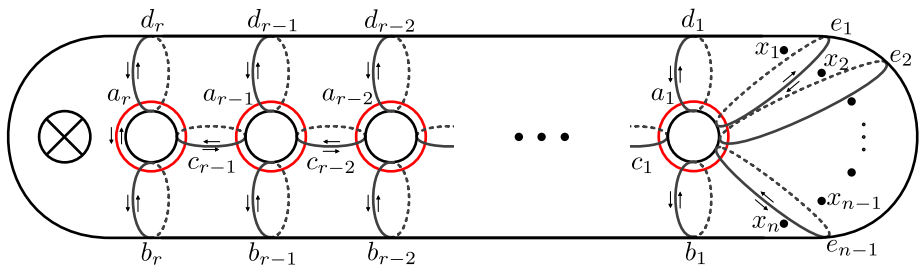}
    \end{center}
    \caption{Surface $N_{g,n}$ for $g=2r+2$ and its simple closed curves}
    \label{fig:surface_even}
\end{figure}

The set of all diffeomorphisms of $N_{g,n}$ which preserve the set of punctures obviously forms a group, which we denote by $\mathrm{Diff} (N_{g,n}) $. Let $\mathrm{Diff}_0 (N_{g,n})$ be the subset consisting of all elements of $\mathrm{Diff} (N_{g,n})$ that are isotopic to the identity, where the isotopies fix $\Delta$. It is immediately seen that $\mathrm{Diff}_0 (N_{g,n})$ is a normal subgroup of $\mathrm{Diff} (N_{g,n})$. The mapping class group of $N_{g,n}$, denoted by $\CN_{g,n}$, is the quotient group $\mathrm{Diff} (N_{g,n}) / \mathrm{Diff} _0 (N_{g,n})$. We denote by $\PN_{g,n}$ the subgroup of $\CN_{g,n}$ consisting of the isotopy classes of diffeomorphisms which fix each puncture. Let $Sym_n$ be a symmetric group on $n$ letters. Clearly we have the exact sequence
\[ 1 \to \PN_{g,n} \to \CN_{g,n} \stackrel{\pi}{\to} Sym_n \to 1, \]
where the last projection is given by the restriction of homeomorphism to its action on the puncture points.
Let $c$ be a simple closed curve on $N_{g,n}$.  If the regular neighborhood of $c$, denoted by $N(c)$, is an annulus (resp. a M\"obius band), we call $c$ \textit{two-sided} (resp.\ \textit{one-sided}) simple closed curve. Let $a$ be a two-sided simple closed curve on $N_{g,n}$. By the definition, the regular neighborhood of $a$ is an annulus, and it has two possible orientation. Now, we fix one of its two possible orientations. For two-sided simple closed curve $a$, we can define the Dehn twist $t_a$. We indicate the direction of a Dehn twist by an arrow beside the curve $a$ as shown in Figure.~\ref{fig:dehn_twist}.

\begin{figure}[ht]
    \begin{center}
        \includegraphics[scale=1.0,clip]{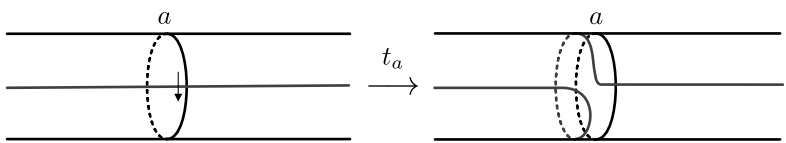}
    \end{center}
    \caption{Dehn twist along a simple closed curve $a$}
    \label{fig:dehn_twist}
\end{figure}

It is well known that $\CN_{g,n}$ is not generated by Dehn twists. We need another class of diffeomorphisms, called Y-homeomorphism, to generate $\CN_{g,n}$. A Y-homeomorphism is defined as follow. For a one-sided simple closed curve $m$ and a two-sided oriented simple closed curve $a$ which intersects $m$ transversely in one point, the regular neighborhood $K$ of $m \cup a$ is homomeomorphic to the Klein bottle with one hole. Let $M$ be the regular neighborhood of $m$. Then the {\textit{Y-homeomorphism}} $Y_{m,a}$ is the isotopy class of the diffeomorphism obtained by pushing $M$ once along $a$ keeping the boundary of $K$ fixed (see Figure.~\ref{fig:y-homeo}).\\

\begin{figure}[ht]
    \begin{center}
        \includegraphics[scale=0.8,clip]{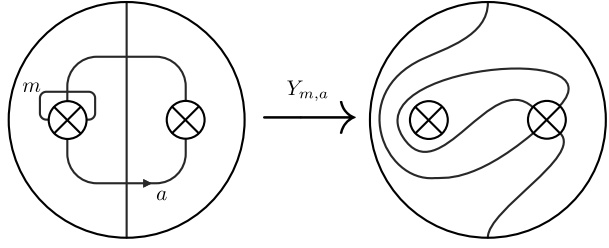}
    \end{center}
    \caption{Y-homeomorphism on $K$}
    \label{fig:y-homeo}
\end{figure}

Furthermore, to generate the groups $\CN_{g,n}$ and $\PN_{g,n}$ we need a puncture slide. A puncture slide is defined as follow. Let $M$ denote a M\"obius band with a puncture $x$ embedded in $N_{g,n}$. For a one-sided simple closed curve $\alpha$ based at $x$ on $M$, we push the puncture $x$ once along $\alpha$ keeping the boundary of $M$ fixed. Then a {\textit{puncture slide}} on $M$ is described as the result.

\begin{figure}[ht]
    \begin{center}
        \includegraphics[scale=0.8,clip]{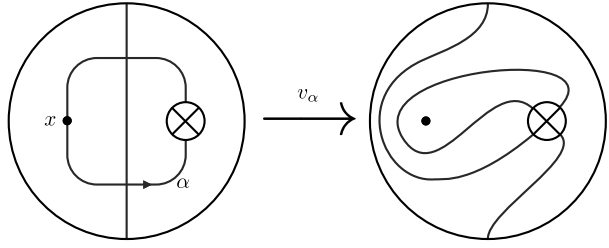}
    \end{center}
    \caption{Puncture slide on $M$}
    \label{fig:puncture_slide}
\end{figure}

These diffeomorphisms have the following properties.

\begin{lem}
    \label{lem:Property_Dehn_Twist}
    For any diffeomorphism $f$ of the surface $N_{g,n}$ and a two-sided simple closed curve $a$, we have

    \[ t_{f(a)}^{\epsilon} = f t_a f^{-1}, \]
    where if $f\mid_{N_a}$ is an orientation preserving diffeomorphism (resp.\ orientation reversing diffeomorphism), then $\epsilon = 1$ (resp. $\epsilon = -1$).
\end{lem}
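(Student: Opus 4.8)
The plan is to reduce everything to the standard local model for a Dehn twist and then to the elementary fact that conjugating the model twist by an orientation-reversing diffeomorphism of an annulus inverts it. Recall that $t_a$ is supported in the annular neighborhood $N(a)$: after fixing an orientation-preserving embedding $\phi \colon A \to N(a)$ of the standard oriented annulus $A = S^1 \times [0,1]$, the twist is $t_a = \phi \circ T \circ \phi^{-1}$ extended by the identity, where $T(\theta,s) = (\theta + 2\pi\beta(s), s)$ and $\beta$ is a smooth function with $\beta(0)=0$ and $\beta(1)=1$. The same model, with its own orientation-preserving embedding $\psi \colon A \to N(f(a))$, defines $t_{f(a)}$.

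First I would observe that, because $t_a$ is the identity outside $N(a)$, the conjugate $f t_a f^{-1}$ is the identity outside $f(N(a)) = N(f(a))$; hence it is supported in the annular neighborhood $N(f(a))$. Since the mapping class group of an annulus rel boundary is infinite cyclic generated by the twist, any such supported map that is conjugate to a single twist must equal $t_{f(a)}^{\pm 1}$, so the only issue is to determine the sign. Writing $r = \psi^{-1}\circ (f|_{N(a)}) \circ \phi \colon A \to A$, one computes directly
\[ f t_a f^{-1} = (f\phi)\, T\, (f\phi)^{-1} = \psi\, r\, T\, r^{-1}\, \psi^{-1}, \]
so the whole statement hinges on the parity of $r$.

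Next I would split into the two cases recorded in the lemma. When $f|_{N(a)}$ carries the chosen orientation of $N(a)$ to the chosen orientation of $N(f(a))$, the map $r$ preserves orientation, hence $r T r^{-1} = T$ in the mapping class group of $A$ rel boundary, giving $f t_a f^{-1} = \psi T \psi^{-1} = t_{f(a)}$ and $\epsilon = 1$. When $f|_{N(a)}$ reverses orientation, $r$ is an orientation-reversing self-diffeomorphism of $A$, and the key computation is $r T r^{-1} = T^{-1}$: taking the reflection $r(\theta,s)=(-\theta,s)$ one gets $rTr^{-1}(\theta,s) = (\theta - 2\pi\beta(s), s) = T^{-1}(\theta,s)$, and any other orientation-reversing $r$ differs from this reflection by an orientation-preserving map, which leaves the conclusion unchanged. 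Thus $f t_a f^{-1} = \psi T^{-1}\psi^{-1} = t_{f(a)}^{-1}$ and $\epsilon = -1$.

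The main obstacle is the orientation bookkeeping rather than any hard computation. Because $N_{g,n}$ is non-orientable there is no global orientation to appeal to, so the argument must be carried out entirely inside the annular neighborhoods, where the twist is well defined only after a choice of orientation; the content of the lemma is precisely that this local choice interacts with $f$ through the sign $\epsilon$. The one genuinely computational input is the identity $r T r^{-1} = T^{-1}$ for orientation-reversing $r$, which combines with the observation that $f t_a f^{-1}$ is supported in $N(f(a))$ to force it to be $t_{f(a)}^{\pm 1}$, the sign being read off from whether $f|_{N(a)}$ matches the two chosen orientations.
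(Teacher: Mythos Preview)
The paper states this lemma without proof, as a standard background fact about Dehn twists on non-orientable surfaces; there is no argument in the paper to compare against. Your proof is the standard one and is correct: reduce to the annular support, transport the model twist $T$ through the change-of-chart $r=\psi^{-1}\circ f|_{N(a)}\circ\phi$, and read off the sign from whether $r$ preserves or reverses the orientation of $A$. The one place to be slightly more careful is the assertion that $rTr^{-1}=T$ (resp.\ $T^{-1}$) in the mapping class group of $A$ rel boundary: your explicit check with $r(\theta,s)=(-\theta,s)$ is fine, and your reduction of the general case to this one via composition with an orientation-preserving self-map of $A$ is valid, but note that such $r$ may swap the boundary components, so the verification that orientation-preserving $r$ fix the class of $T$ should also allow for that (it does, since the winding number computation is insensitive to which boundary circle is which). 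Also, $f(N(a))$ need not literally equal the chosen $N(f(a))$, only be isotopic to it, but this is harmless at the level of mapping classes.
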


\begin{lem}
    \label{lemma:Property_Y_homeo}
    For a one-sided simple closed curve $m$ and a two-sided simple closed curve $a$, we have the following.\\
    $(1)$ $Y_{m^{-1},a} = Y_{m,a}$. \\
    $(2)$ $Y_{m,a^{-1}} = Y_{m,a}^{-1}$. \\
    $(3)$ For any element $f$ in $\CN_{g,n}$, we have $f Y_{m,a} f^{-1} = Y_{f(m),f(a)}$. \\
\end{lem}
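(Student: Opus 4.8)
The plan is to argue directly from the definition of the Y-homeomorphism $Y_{m,a}$ as the isotopy class of the map obtained by pushing the Möbius-band neighborhood $M$ of $m$ once along the oriented curve $a$, keeping $\partial K$ fixed, where $K$ is the regular neighborhood of $m\cup a$. All three identities will follow by tracking which pieces of this data actually enter the construction. For part $(1)$, I would observe that the only role played by $m$ is through its regular neighborhood $M$, a Möbius band, and that the symbol $m^{-1}$ changes only the direction in which $m$ is traversed. This direction never enters the construction: the object being pushed is the whole band $M$, not an oriented curve. Since $m$ and $m^{-1}$ share the same neighborhood $M$ and the same ambient curve $a$, the two pushing diffeomorphisms coincide, giving $Y_{m^{-1},a}=Y_{m,a}$. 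For part $(2)$, I would compose the push of $M$ along $a$ with the push of $M$ along $a^{-1}$. Performing these successively, with $\partial K$ fixed throughout, carries $M$ forward along $a$ and then back along the same curve, so the composite is isotopic rel $\partial K$ to the identity. Hence $Y_{m,a}\,Y_{m,a^{-1}}=1$ in $\CN_{g,n}$, which is exactly the asserted relation $Y_{m,a^{-1}}=Y_{m,a}^{-1}$.

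Part $(3)$ is the naturality statement and is the crux of the lemma. A representative of $Y_{m,a}$ is supported in the Klein-bottle-with-one-hole $K$ and is, on $K$, the standard pushing map determined by the pair $(M,a)$. Conjugating a diffeomorphism by $f$ amounts to transporting all of its defining data by $f$: the diffeomorphism $f$ carries $K$ to a regular neighborhood $K'$ of $f(m)\cup f(a)$, the Möbius band $M$ to a Möbius-band neighborhood $f(M)$ of $f(m)$, and the oriented curve $a$ to the oriented curve $f(a)$. Therefore $f\,Y_{m,a}\,f^{-1}$ is precisely the map that pushes $f(M)$ once along $f(a)$ rel $\partial K'$, which is by definition $Y_{f(m),f(a)}$.

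The main obstacle is the orientation bookkeeping in part $(3)$. In contrast with the Dehn-twist formula of Lemma~\ref{lem:Property_Dehn_Twist}, no correction sign $\epsilon$ appears here, and I would need to explain why. The point is that the direction of the Y-homeomorphism is governed solely by the orientation of the two-sided curve $a$, whereas the Möbius band $M$ carries no orientation to be reversed; when $f$ transports $a$ to $f(a)$ it transports this orientation consistently, so the pushing direction is preserved and the conjugate is exactly $Y_{f(m),f(a)}$ with $f(a)$ given its induced orientation. Making this transport of orientations precise, and checking that the isotopy in part $(2)$ can genuinely be taken rel $\partial K$, are the two places requiring care; the remainder is a direct reading of the definitions.
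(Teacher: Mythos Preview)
The paper does not supply a proof of this lemma; it is listed in the Preliminaries as a standard fact, alongside the analogous properties of Dehn twists and puncture slides. Your direct verification from the definition is correct and is exactly the argument one would give: parts~(1) and~(2) follow because the construction of $Y_{m,a}$ uses only the M\"obius band $M$ (insensitive to the orientation of its core) and the oriented curve $a$ (reversing which reverses the push), while part~(3) is the usual transport-of-structure argument. Your remark on why no sign $\epsilon$ appears, in contrast to Lemma~\ref{lem:Property_Dehn_Twist}, is also to the point: the symbol $Y_{f(m),f(a)}$ already records the orientation of $f(a)$ induced by $f$, whereas in the Dehn-twist formula the orientation of the annulus $N(f(a))$ is fixed independently of $f$ and may disagree with the transported one.
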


\begin{lem}
    \label{lemma:Property_puncture_slide}
    Let $v$ be a puncture slide of $x$ along a one-sided simple closed curve $\alpha$.
    For any element $f$ in $\CN_{g,n}$,
    $f v f^{-1}$ is the puncture slide of $f(x)$ along $f(\alpha)$.
\end{lem}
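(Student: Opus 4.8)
\section*{Proof proposal}

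The plan is to prove this by the same \emph{naturality} (change-of-coordinates) principle that underlies Lemmas~\ref{lem:Property_Dehn_Twist} and~\ref{lemma:Property_Y_homeo}: a mapping class defined by a construction supported in an embedded subsurface is carried, under conjugation by $f$, to the same construction performed on the image subsurface. The crucial structural feature here is that a puncture slide is supported in a M\"obius band, which, being non-orientable, carries no orientation; this is precisely why --- unlike the Dehn twist in Lemma~\ref{lem:Property_Dehn_Twist} --- no sign $\epsilon=\pm1$ should appear in the conclusion.

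First I would fix a representative diffeomorphism, still denoted $v$, of the puncture slide, and recall from its definition that $v$ is the identity outside the M\"obius band $M$ and fixes $\partial M$ pointwise, while inside $M$ it realizes the push of the puncture $x$ once along the one-sided curve $\alpha$. Fixing also a representative of $f$, I note that, since $f$ is a diffeomorphism of $N_{g,n}$ preserving the puncture set $\Delta$, the restriction $f|_M\colon M\to f(M)$ is a diffeomorphism of M\"obius bands carrying the triple $(M,x,\alpha)$ to $(f(M),f(x),f(\alpha))$, with $f(x)\in\Delta$ and $f(\alpha)$ a one-sided simple closed curve based at $f(x)$ on $f(M)$.

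Next I would analyse the support of $fvf^{-1}$. For a point $p\notin f(M)$ we have $f^{-1}(p)\notin M$, so $v(f^{-1}(p))=f^{-1}(p)$ and hence $fvf^{-1}(p)=p$; the same computation shows that $fvf^{-1}$ fixes $f(\partial M)=\partial f(M)$ pointwise. Thus $fvf^{-1}$ is supported in $f(M)$ and fixes its boundary. On $f(M)$ itself, writing $q=f(p)$ with $p\in M$ and using $v(M)=M$, one finds $fvf^{-1}(q)=f(v(f^{-1}(q)))$, so that $fvf^{-1}$ restricts to $f|_M\circ(v|_M)\circ(f|_M)^{-1}$, the transport under $f|_M$ of the local point-pushing model. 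Since $f|_M$ carries $(x,\alpha)$ to $(f(x),f(\alpha))$ and carries the chosen pushing direction along $\alpha$ to the corresponding direction along $f(\alpha)$, this restricted map is exactly the diffeomorphism of $f(M)$ pushing $f(x)$ once along $f(\alpha)$ while keeping $\partial f(M)$ fixed --- that is, the puncture slide of $f(x)$ along $f(\alpha)$.

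I expect the only genuine point requiring care to be the identification in this last step: that the conjugated point-pushing isotopy is, up to isotopy rel $\partial f(M)$ and rel $\Delta$, the defining isotopy of the puncture slide along $f(\alpha)$, rather than some other self-map of $f(M)$. This is where one must appeal to the fact that the puncture slide is determined up to isotopy by the pair $(f(\alpha),f(x))$ together with the pushing direction, so that transporting the entire construction by the diffeomorphism $f|_M$ yields precisely the desired slide. Finally, one should observe that passing from diffeomorphisms to their isotopy classes is legitimate here because conjugation by $f$ descends to a well-defined operation on $\CN_{g,n}$, so the equality holds at the level of mapping classes and is independent of the chosen representatives.
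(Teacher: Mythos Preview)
Your naturality argument is correct and is the standard proof of this conjugation property. The paper itself states Lemma~\ref{lemma:Property_puncture_slide} without proof (as it does for Lemmas~\ref{lem:Property_Dehn_Twist} and~\ref{lemma:Property_Y_homeo}), treating it as a basic fact, so there is nothing to compare against beyond noting that your change-of-coordinates argument is exactly what one would supply.

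One small expository point: your explanation that no sign $\epsilon$ appears ``because the M\"obius band carries no orientation'' is slightly off-target. The real reason is that the direction of a puncture slide is encoded in the orientation of the based loop $\alpha$ itself, and $f$ carries this to an orientation on $f(\alpha)$; thus ``the puncture slide along $f(\alpha)$'' is already unambiguous. (By contrast, for Dehn twists the twist direction is determined by a separately chosen orientation of the annular neighborhood, which $f$ may or may not respect --- hence the $\epsilon$ in Lemma~\ref{lem:Property_Dehn_Twist}.) Indeed the paper uses Lemma~\ref{lemma:Property_puncture_slide} in precisely this way just before Lemma~\ref{lem:puncture_slide_generate}: since $\tau$ sends $\alpha_1$ to itself with reversed orientation, one gets $\tau v_1\tau=v_1^{-1}$, the slide along $\alpha_1^{-1}$. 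Your proof handles this correctly when you say $f|_M$ ``carries the chosen pushing direction along $\alpha$ to the corresponding direction along $f(\alpha)$''; only the accompanying heuristic about non-orientability could be sharpened.
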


\section{Proof of Theorem~\ref{thm:main} }
\label{section:three}
In this section, we proof theorem~\ref{thm:main}. We use the following lemma in order to prove theorem~\ref{thm:main}.
\begin{lem}
    \label{lemma:main_tool_of_proof_main_theorem_two}
    Let $G$ and $N$ be groups and let $H$ and $K$ be subgroups of $G$. Suppose that the sequence
    \[ 1 \to H \stackrel{i}{\to} G \stackrel{\pi}{\to} N \to 1 \]
    is exact.
    If $K$ contains $i(H)$ and the restriction of $\pi$ to $K$ is a surjection onto $N$, then we have that $K = G$.
\end{lem}

\begin{proof}
    Let $g$ be any element of $G$. If $g$ is in $i(H)$, then $K$ contains $g$ by the assumption $i(H) \subset K$. We suppose that $g$ is not in $i(H)$. Since the restriction $\pi \mid _K$ is surjection, there exists $k \in K$ such that $\pi(g) = \pi(k)$. Since $\pi(g k^{-1}) = e$, we see that $g k^{-1} \in Ker~\pi = Im~i$. Therefore, there exists $h \in H$ such that $g k^{-1} = i(h)$. Since $i(h) \in K$, we have $g = i(h) k \in K$. Hence, $G \subset K$.
\end{proof}

Since we have the following exact sequence
\[ 1 \to \PM _{g,n} \to \CN _{g,n} \stackrel{\pi}{\to} \mathrm{Sym}_n \to 1, \]
we have following corollary.

\begin{cor}
    Let $K$ denote the subgroup of $\CN _{g,n}$. If $K$ contains $\PM _{g,n}$ and the restriction $\pi$ to $K$ is a surjection to $\mathrm{Sym}_n$, then $K$ is equal $\CN _{g,n}$.
\end{cor}

We recall the Korkmaz's generating set for $\PM _{g,n}$. Let $\Lambda$ be the set of simple closed curves indicated in Figure.~\ref{fig:surface_odd} for $g=2r+1$, and in Figure.~\ref{fig:surface_even} for $g=2r+2$. Hence
\[ \Lambda = \{a_1, {a_2}, \ldots, {a_r},{b_1}, {b_2}, \ldots, {b_r},{c_1},{c_2},\ldots,{c_{r-1}},{d_1},{d_2}, \ldots, {d_r},{e_1},{e_2},\ldots,{e_{n-1}} \} \]
for $g=2r+1$, and
\[ \Lambda = \{a_1, {a_2}, \ldots, {a_r},{b_1}, {b_2}, \ldots,{b_{r+1}},{c_1},{c_2},\ldots,{c_r},{d_1},{d_2}, \ldots, {d_r},{e_1},{e_2},\ldots,{e_{n-1}} \} \]
fro $g=2r+2$.
In the figures, we choose orientations of local neighborhoods of simple closed curves in $\Lambda$, the orientation is that the arrow points to the right if we approach the curve. Therefore for the simple closed curve $a$ in $\Lambda$, the Dehn twist about $a$ is determined by this particular choice of orientation.

Let $\alpha_i$ be the one-sided simple closed curve based at $x_i$ for $i=1,2,\ldots,n$ as in Figure.~\ref{fig:curves_alpha_and_beta}. If $g=2r+2$, let $\beta_i$ be the one-sided simple closed curve based at $x_i$ as in Figure.~\ref{fig:curves_alpha_and_beta}. For $i=1,2,\ldots,n$, let $v_i$ and $w_i$ be puncture slides along $\alpha_i$ and $\beta_i$, respectively.

\begin{figure}[ht]
    \begin{center}
        \includegraphics[scale=1.0,clip]{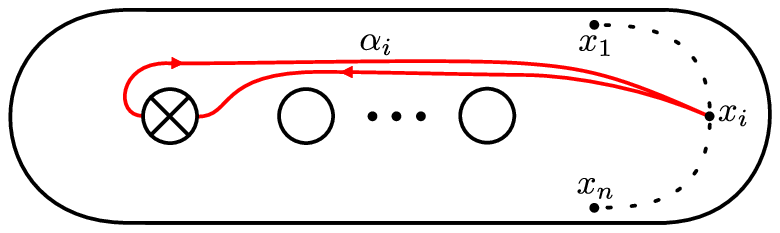}
        \vspace{1pt}
        \includegraphics[scale=1.0,clip]{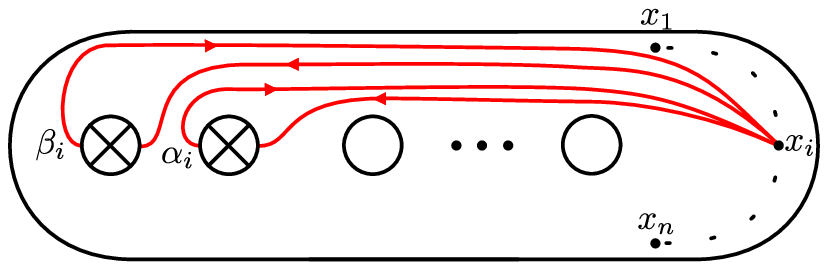}
    \end{center}
    \caption{Simple closed curves $\alpha_1,\ldots, \alpha_n$ and $\beta_1,\ldots,\beta_n$.}
    \label{fig:curves_alpha_and_beta}
\end{figure}

Let $y$ be a crosscap slide such that $y^2$ is the Dehn twist along $\xi$.

\begin{figure}[ht]
    \begin{center}
        \includegraphics[scale=0.6,clip]{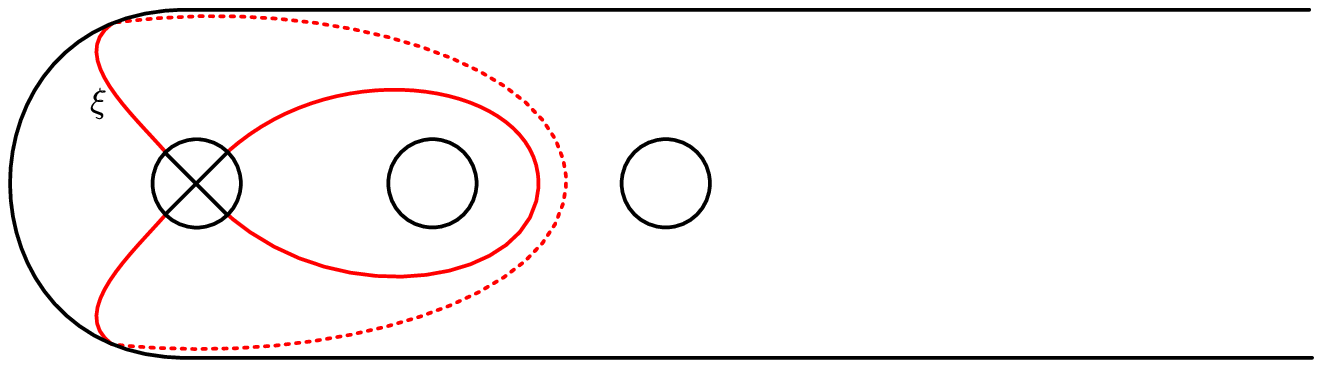}
        \hspace{1pt}
        \includegraphics[scale=0.6,clip]{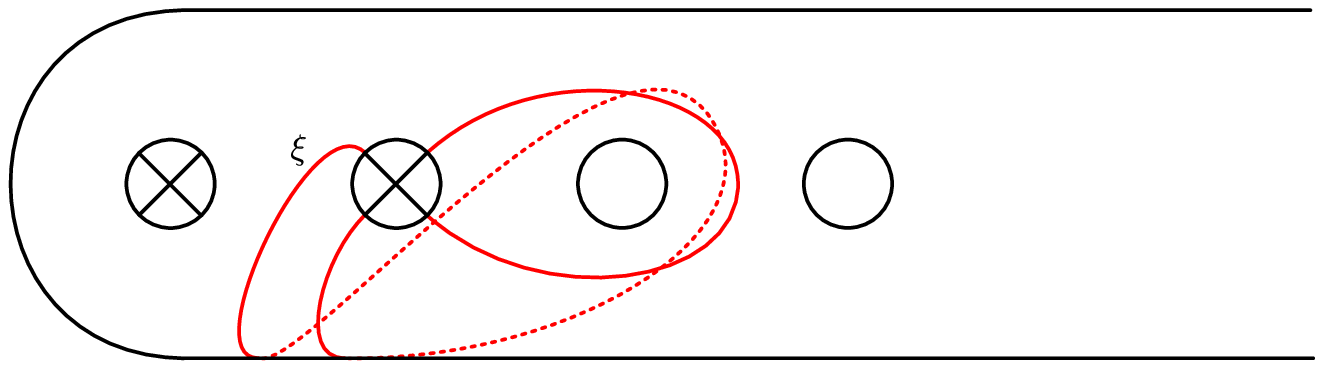}
    \end{center}
    \caption{Simple closed curve $\xi$.}
    \label{fig:simple_closed_curve_xi}
\end{figure}

\begin{thm}
    For $g\geq3$, the pure mapping class group $\PM _{g,n}$ is generated by
    \[(i)~ \{t_l \mid l \in \Lambda \} \cup \{v_i \mid 1\leq i \leq n\} \cup \{y\}~\mbox{if $g$ is odd, and} \]
    \[(ii)~ \{t_l \mid l \in \Lambda \} \cup \{v_i,w_i \mid 1\leq i \leq n\} \cup \{ y \} ~\mbox{if $g$ is even.} \]
\end{thm}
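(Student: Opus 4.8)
The plan is to reduce this statement to Korkmaz's already-established finite generating set for $\PN_{g,n}$ and then trade curves between generating sets one by one. Korkmaz \cite{ko1} gives an explicit finite generating set $S_0$ for $\PN_{g,n}$ consisting of Dehn twists along a specified finite collection of simple closed curves, together with finitely many Y-homeomorphisms (one for $g$ odd, two for $g$ even — corresponding to the one or two crosscaps) and, when $n\geq 1$, the puncture slides needed to realize the kernel of forgetting punctures. The first step is therefore to write down $S_0$ precisely in the coordinates of Figures~\ref{fig:surface_odd} and~\ref{fig:surface_even}, so that every curve in $S_0$ is isotopic to one we can name in terms of the curves $\Lambda$, the curves $\alpha_i,\beta_i$, and the curve $\xi$.

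Next I would handle the punctured part. The curves $e_1,\dots,e_{n-1}$ together with $a_1,\dots$ are chosen precisely so that the Dehn twists $t_{e_i}$ generate (modulo the rest) the point-pushing/braiding subgroup that permutes punctures; and the puncture slides $v_i$ (and $w_i$ in the even case) account for the remaining, non-orientable, puncture-sliding moves in $\PN_{g,n}$ that Korkmaz's generators produce. So the subgoal here is: show every generator in $S_0$ that involves a puncture is a product of $t_l$ for $l\in\Lambda$ and the $v_i$ (resp.\ $v_i,w_i$). For the closed-surface part, I would invoke Szepietowski's refinement \cite{sez3}, which reduces Chillingworth's generators to $g+1$ Dehn-twist-and-Y-homeomorphism generators; combined with the lantern- and chain-relation manipulations standard in this subject, every Dehn twist along a curve appearing in $S_0$ can be rewritten using only the twists $t_l$, $l\in\Lambda$, and the twist $t_\xi=y^2$. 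Lemmas~\ref{lem:Property_Dehn_Twist}, \ref{lemma:Property_Y_homeo}, and \ref{lemma:Property_puncture_slide} are exactly the conjugation tools that let me move a generator of $S_0$ into a curve I control: conjugating $t_a$ by a word $f$ in my proposed generators replaces the curve $a$ by $f(a)$, and similarly for Y-homeomorphisms and puncture slides, so it suffices to exhibit, for each $a$ in $S_0$, a mapping class $f$ already in the subgroup generated by my set with $f$ carrying some $l\in\Lambda$ (or $\xi$, or $\alpha_i$, or $\beta_i$) onto $a$.

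The converse inclusion is essentially immediate: each $t_l$ with $l\in\Lambda$ is a Dehn twist and hence lies in $\PN_{g,n}$ (all the $l$ are fixed setwise and no puncture is moved), each $v_i,w_i$ fixes every puncture by construction, and $y$ is a crosscap slide, which also fixes every puncture; so the subgroup generated by the proposed set is contained in $\PN_{g,n}$, and we only need the spanning direction.

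The main obstacle I expect is the explicit rewriting in the two bulleted steps — matching Korkmaz's curves to the curves drawn in Figures~\ref{fig:surface_odd}--\ref{fig:simple_closed_curve_xi} and producing the conjugating elements. This is where one must be careful about orientations of annular neighborhoods (so that the exponent $\epsilon$ in Lemma~\ref{lem:Property_Dehn_Twist} is tracked correctly) and about the interaction between a crosscap slide $y$ and the Dehn twists near it, using the relation $y^2=t_\xi$. In practice this is a finite check that can be organized by first reducing to the case $n=0$ (closed surface) — where Chillingworth's/Szepietowski's generators plus the single crosscap slide already suffice and the rewriting is classical — and then adding back one puncture at a time, each time producing the new generator $v_i$ (and $w_i$) and re-expressing the new $e_i$ twist; the inductive step is short because adding a puncture only introduces the braid generator $t_{e_i}$ and the slide(s), both of which are in the proposed list.
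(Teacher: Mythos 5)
The paper offers no proof of this theorem at all: it is introduced with the sentence ``We recall the Korkmaz's generating set for $\PM_{g,n}$,'' i.e.\ the statement is precisely Korkmaz's generating theorem for the pure mapping class group of a punctured non-orientable surface \cite{ko1}, transcribed into the notation of Figures~\ref{fig:surface_odd} and~\ref{fig:surface_even}. Your plan to ``reduce the statement to Korkmaz's already-established finite generating set $S_0$ and then trade curves one by one'' is therefore close to circular as written: the set $\{t_l \mid l\in\Lambda\}\cup\{v_i\ (,w_i)\}\cup\{y\}$ \emph{is} the set $S_0$ you describe, so either the proof is a one-line citation (which is what the paper intends) or you are undertaking to reprove Korkmaz's theorem, in which case essentially all of the substantive work is still outstanding.

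If the latter is your intention, the gap is concrete and sits exactly where you flag the ``main obstacle.'' The closed-surface input (Lickorish, Chillingworth, Szepietowski) only handles $n=0$; the passage to $n$ punctures goes through the Birman exact sequence, whose kernel is the point-pushing subgroup generated by pushes along a full generating set of the fundamental group based at the new puncture --- on the order of $g$ plus the number of previously added punctures many loops, not one. Your inductive step asserts that adding a puncture ``only introduces the braid generator $t_{e_i}$ and the slide(s),'' but that is precisely the nontrivial claim: one must show the push along each two-sided loop factors as a product of Dehn twists along the two boundary curves of its annular neighborhood, verify those twists already lie in the subgroup generated by $\{t_l\}_{l\in\Lambda}$, and reduce the pushes along one-sided loops to $v_i$ (and, for even genus, $w_i$) modulo elements already obtained. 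None of this is exhibited, and it is also where the odd/even asymmetry (one slide per puncture versus two) must be explained; relatedly, your description of $S_0$ as containing ``one Y-homeomorphism for $g$ odd, two for $g$ even'' misplaces that asymmetry --- both parities use the single crosscap slide $y$, and the difference lies in the puncture slides. So the proposal identifies the correct strategy (it is essentially Korkmaz's own), but as it stands it is an outline rather than a proof.
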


The following theorem can be deduced from Korkmaz's generating set by using the method of Humphries. Set
\[ \Lambda' = \{a_1, {a_2}, \ldots, {a_r},{b_1}, {b_2},{c_1},{c_2},\ldots,{c_{r-1}},{d_1},{d_2},{e_1},{e_2},\ldots,{e_{n-1}} \} \]
for $g=2r+1$, and
\[ \Lambda' = \{a_1, {a_2}, \ldots, {a_r},{b_1}, {b_2}, {b_{r+1}},{c_1},{c_2},\ldots,{c_r},{d_1},{d_2},{e_1},{e_2},\ldots,{e_{n-1}} \} \]
fro $g=2r+2$.

\begin{thm}
    For $g\geq3$, the pure mapping class group $\PM _{g,n}$ is generated by
    \[(i)~ \{t_l,v_i,y \mid l \in \Lambda',1\leq i \leq n\}~\mbox{if $g$ is odd and} \]
    \[(i)~ \{t_l,v_i,w_i,y \mid l \in \Lambda',1\leq i \leq n\}~\mbox{if $g$ is even.} \]
\end{thm}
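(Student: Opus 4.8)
The plan is to start from Korkmaz's generating set (the previous theorem) and eliminate, one at a time, the Dehn twists along the curves not appearing in $\Lambda'$; that is, the twists $t_{b_3},\ldots,t_{b_r}$ (or $t_{b_3},\ldots,t_{b_r}$ together with the appropriate modification for $g$ even), $t_{c_1},\ldots$ only those removed, and $t_{d_3},\ldots,t_{d_r}$. Concretely, for $g=2r+1$ the curves to be removed are $b_3,\dots,b_r$ and $d_3,\dots,d_r$; for $g=2r+2$ they are $b_3,\dots,b_r$ (keeping $b_{r+1}$) and $d_3,\dots,d_r$. The point of Humphries' trick is that each such removed twist can be rewritten as a product of conjugates of the retained twists, the conjugating elements themselves being words in the retained generators.

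First I would record the relevant configuration of the curves in $\Lambda$ from Figures~\ref{fig:surface_odd} and~\ref{fig:surface_even}: the chain $a_1,b_1,a_2,b_2,\ldots$ behaves exactly like the Humphries chain in the orientable case on the orientable part of the surface, and the $c_i$ and $d_i$ are linked to the $a_i,b_i$ in the standard way. The key local identity is the \emph{lantern relation}: on a four-holed sphere embedded in $N_{g,n}$ whose boundary curves and three interior curves all lie among the retained family (after suitable further twisting), one has $t_{\partial_1}t_{\partial_2}t_{\partial_3}t_{\partial_4}=t_{x}t_{y}t_{z}$, which lets one express one "new" twist in terms of six others. Iterating this down the chain, exactly as Humphries does for $\mathcal M_{g,0}$, expresses $t_{b_k}$ for $k\geq 3$ in terms of $t_{b_1},t_{b_2}$ and $t_{a_i},t_{c_i},t_{d_i}$ (and conjugates thereof), and similarly handles the $d_k$. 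One must check at each stage, using Lemma~\ref{lem:Property_Dehn_Twist}, that the conjugating diffeomorphisms restrict orientation-preservingly on the relevant annuli so that the signs $\epsilon$ work out and no $Y$-homeomorphism or puncture slide is needed in the rewriting — the curves $\alpha_i,\beta_i,\xi$ and the elements $v_i,w_i,y$ are simply carried along unchanged.

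The main obstacle I anticipate is bookkeeping rather than a conceptual leap: one must verify that every four-holed sphere used in a lantern relation actually embeds in the non-orientable surface $N_{g,n}$ with all seven curves of the correct isotopy type, that these curves are two-sided (so the twists are defined), and that the punctures $x_1,\ldots,x_n$ and the crosscap(s) can be kept disjoint from the spheres used — otherwise the embedded subsurface is not a four-holed \emph{sphere}. For the even case there is the extra subtlety that $b_{r+1}$ must be retained (it interacts with the second crosscap), so the induction on the $b$-chain terminates one step earlier and one checks separately that $b_r$ itself is expressible. I would present the argument by first doing $g$ odd in full (the chain is entirely in the "orientable part" plus one crosscap, so Humphries' computation applies essentially verbatim), and then indicating the modifications for $g$ even, invoking Lemmas~\ref{lemma:Property_Y_homeo} and~\ref{lemma:Property_puncture_slide} only to note that the non-twist generators are unaffected.
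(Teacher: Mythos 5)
Your proposal follows exactly the route the paper itself indicates: the paper gives no written proof of this theorem, only the remark that it ``can be deduced from Korkmaz's generating set by using the method of Humphries,'' and your lantern-relation argument on embedded four-holed spheres is precisely that method. The curves you single out for elimination ($b_3,\dots,b_r$ and $d_3,\dots,d_r$, retaining $b_{r+1}$ when $g$ is even) agree with the difference $\Lambda\setminus\Lambda'$, so your outline simply supplies the details the paper omits.
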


\subsection{In the case of odd genus}
In this subsection, we suppose that $g=2r+1$ for a positive integer $r \geq 6$. Let us consider the two models of $N_{g,b}$ as shown in Figure.~\ref{fig:reflection_sigma} and \ref{fig:reflection_tau}. (In these pictures, we will suppose that $r=2k$ and the number of punctures $b=2l+1$ is odd for an integer $l \geq 0$.)
We deform the surface in Figure.~\ref{fig:reflection_sigma} from the surface in Figure.~\ref{fig:surface_odd} by diffeomorphism $\psi$ such that the simple closed curves and the punctures in Figure.~\ref{fig:surface_odd} map to the curves and punctures with same label in Figure.~\ref{fig:reflection_sigma}, and the deformed surface is symmetrical about a plane across the central of this surface, which we call mirror. Let $\sigma'$ be a reflection of this surface in the mirror and let $\sigma$ be a product $\psi^{-1} \sigma \psi$. Then $\sigma$ is involution in $\CN _{g,n}$.
In the same way, we can define a involution $\tau$ as a reflection in a mirror in Figure.~\ref{fig:reflection_tau}.

\begin{figure}[ht]
    \begin{center}
        \includegraphics[scale=0.90,clip]{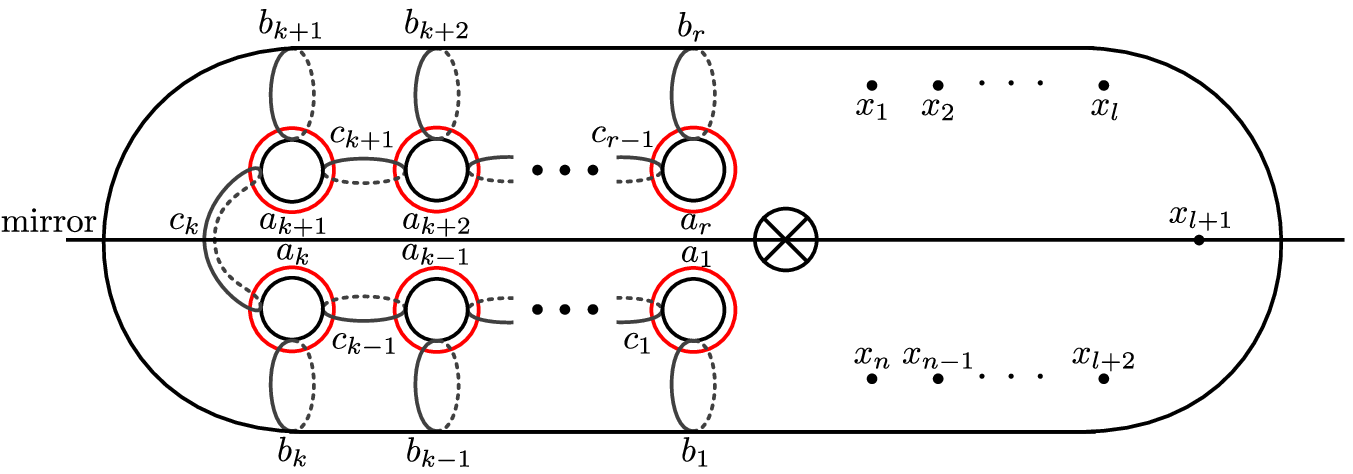}
    \end{center}
    \caption{Involution $\sigma : N_{g,n} \to N_{g,n}$}
    \label{fig:reflection_sigma}
\end{figure}

\begin{figure}[ht]
    \begin{center}
        \includegraphics[scale=0.9,clip]{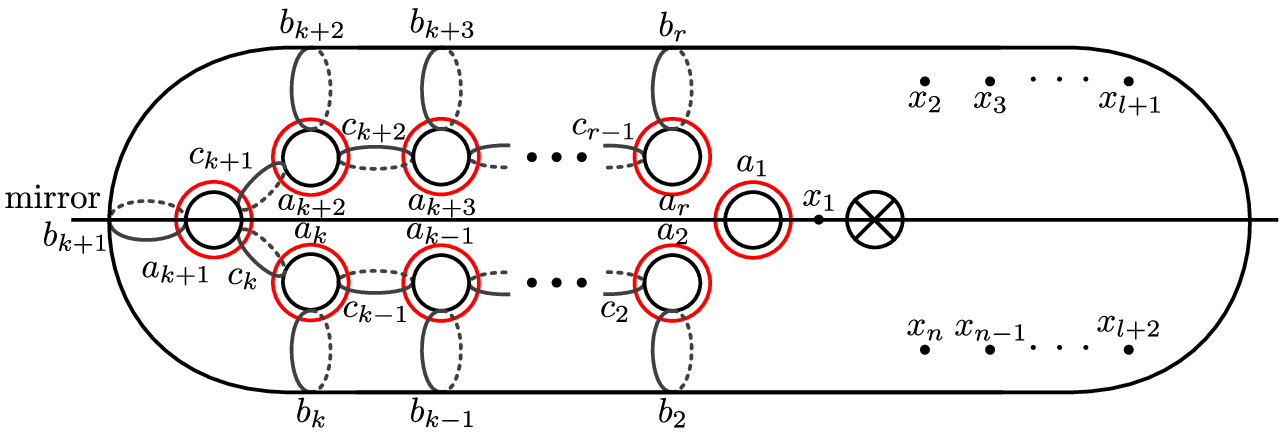}
    \end{center}
    \caption{Involution $\tau : N_{g,n} \to N_{g,n}$}
    \label{fig:reflection_tau}
\end{figure}

We will construct the third involution $I$. We cut the surface $N_{g,n}$ along $a_{k+3} \cup b_k \cup c_k \cup c_{k+1} \cup x$ to obtain the surfaces $S_1$ and $S_2$.(see Figure.\ref{fig:cut_curves_for_I_odd_case}) $S_1$ is a sphere bounded by $a_{k+3} \cup b_k \cup c_k \cup c_{k+1} \cup x$ and $S_2$ is a non-orientable surface of genus $g-8$ with $b$ punctures and $5$ boundaries. Figure.~\ref{fig:involution_I_odd_case} gives the involutions $\overline{I}$ and $\widetilde{I}$ on $S_1$ and $S_2$, respectively. Since $\overline{I}$ and $\widetilde{I}$ coincide on the boundaries, they naturally define an involution $I:N_{g,n} \to N_{g,n}$.

\begin{figure}[ht]
    \begin{center}
        \includegraphics[scale=1.0,clip]{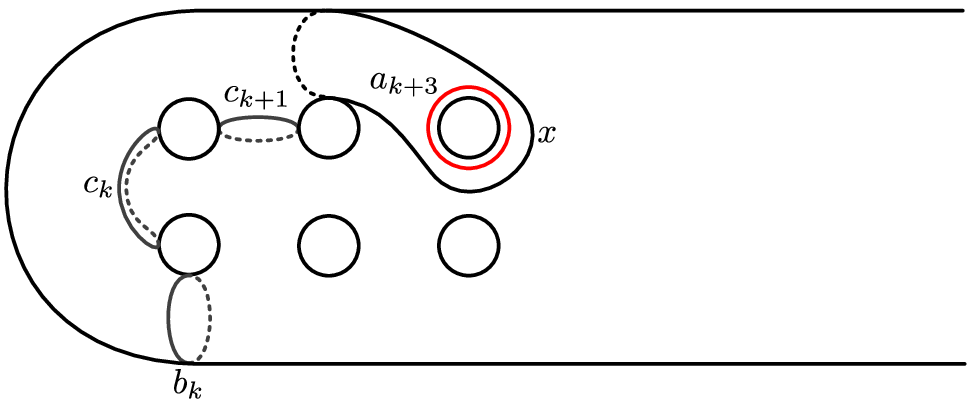}
    \end{center}
    \caption{The curves $a_{k+3}, b_k, c_k, c_{k+1}$ and $x$}
    \label{fig:cut_curves_for_I_odd_case}
\end{figure}

\begin{figure}[ht]
    \begin{center}
        \includegraphics[scale=0.9,clip]{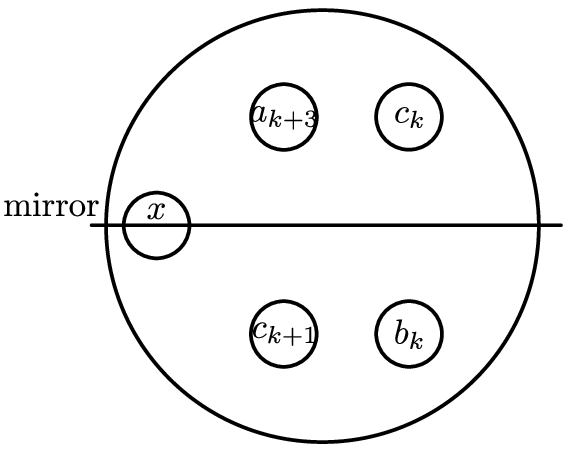}
        \hspace{0.5cm}
        \includegraphics[scale=0.9,clip]{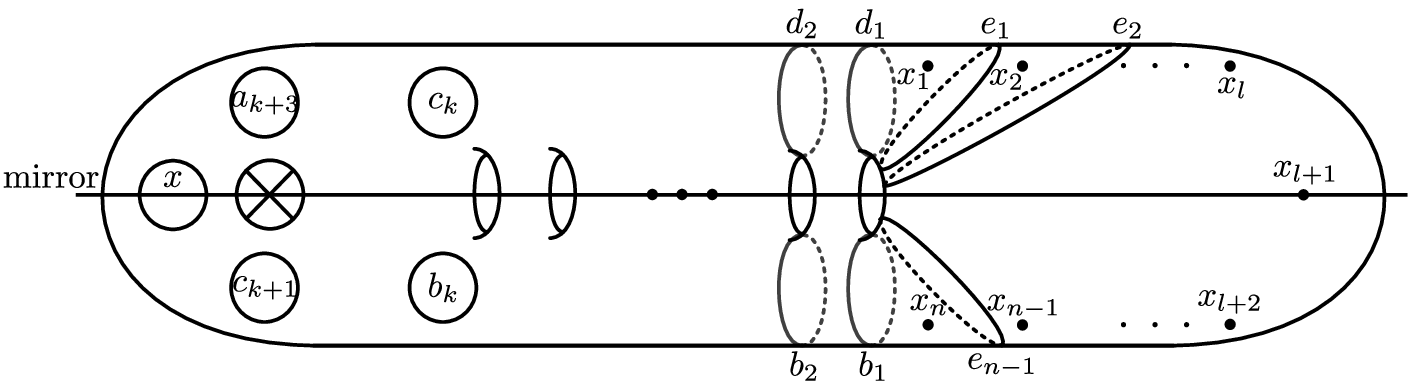}
    \end{center}
    \caption{Involutions $\overline{I}$ and $\widetilde{I}$}
    \label{fig:involution_I_odd_case}
\end{figure}

From the construction of $I$, we see the following:
\[ I (a_{k+3}) = c_{k+1} , I(c_k) = b_k, \]
\[ I (b_1) = d_1 , I (b_2) = d_2. \]

Let $\rho _1$ be the product $\tau t_{a_1}$. Since $\tau$ fixes $a_1$ and the restriction $\tau \mid _{N({a_1})}$ reverses the orientation, by Lemma \ref{lem:Property_Dehn_Twist}, we see that
\[ \tau t_{a_1} \tau = t_{a_1} ^{-1}.\]
Hence, $\tau$ is an involution.
Then we can get following lemma.

\begin{lem}
    \label{lem:dehn_twist_generate1}
    Dehn twists $t_{a_1},t_{a_2},\ldots,t_{a_r}, t_{b_1},t_{b_2}$, $t_{c_1},t_{c_2},\ldots,t_{c_{r-1}}$,$t_{d_1}$ and $t_{d_2}$ are products of involutions $\sigma$,$\tau$,$\rho_1$ and $I$.
\end{lem}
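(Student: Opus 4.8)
The plan is to express each of the listed Dehn twists as a product of $\sigma$, $\tau$, $\rho_1$, and $I$ by exploiting the relations of Lemmas~\ref{lem:Property_Dehn_Twist} and the known images of curves under these involutions. The crucial starting point is the identity $\rho_1 = \tau t_{a_1}$, which gives $t_{a_1} = \tau \rho_1$ directly, so $t_{a_1}$ is a product of two of the given involutions. From there I would propagate: if a Dehn twist $t_x$ has been written as a product of the four involutions, and if one of $\sigma$, $\tau$, $I$ carries the curve $x$ to another curve $y$ in our list, then by Lemma~\ref{lem:Property_Dehn_Twist} we get $t_y^{\pm1} = f t_x f^{-1}$, hence $t_y$ (or its inverse, which is also a product of involutions since the generating set is closed under inversion of such words) is again such a product.

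The second step is therefore to track the orbits of the curves $a_1,\dots,a_r,b_1,b_2,c_1,\dots,c_{r-1},d_1,d_2$ under the subgroup $\langle \sigma,\tau,I\rangle$. The reflection $\sigma$ (Figure~\ref{fig:reflection_sigma}) and the reflection $\tau$ (Figure~\ref{fig:reflection_tau}) are designed so that, between them, the chain of curves $a_1,\dots,a_r$ forms a single orbit — a standard feature of the Humphries-type configuration, where a reflection in one mirror maps $a_i \mapsto a_{j}$ for indices reflected about the mirror's position and the other mirror shifts the chain — so starting from $t_{a_1}$ one reaches every $t_{a_i}$. The map $I$ then provides the cross-links to the $b$, $c$, and $d$ curves via the explicit relations recorded just before the lemma: $I(a_{k+3}) = c_{k+1}$, $I(c_k) = b_k$, $I(b_1) = d_1$, $I(b_2) = d_2$. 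So once $t_{a_{k+3}}$ is in hand we get $t_{c_{k+1}}$, then $\sigma$ and $\tau$ spread this along the $c$-chain to all $t_{c_i}$, then $I(c_k)=b_k$ gives $t_{b_k}$ and the reflections give $t_{b_1}, t_{b_2}$, and finally $I(b_1)=d_1$, $I(b_2)=d_2$ give $t_{d_1}, t_{d_2}$. One must also verify that each conjugating element restricts to an orientation-preserving or orientation-reversing map on the relevant annular neighborhood so that the sign $\epsilon$ is controlled; in the orientation-reversing case one simply gets $t_y^{-1}$, which is harmless.

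The main obstacle I anticipate is bookkeeping rather than conceptual: one must read off from Figures~\ref{fig:reflection_sigma}, \ref{fig:reflection_tau}, and \ref{fig:involution_I_odd_case} the precise action of $\sigma$, $\tau$, and $I$ on \emph{every} curve in the list, check that the union of these actions is transitive enough to connect $a_1$ to the entire set $\{a_i\} \cup \{b_1,b_2\} \cup \{c_i\} \cup \{d_1,d_2\}$, and confirm there is no curve left in a separate orbit. In particular the index $k+3$ appearing in $I(a_{k+3})=c_{k+1}$ must actually be reachable within range $1 \le k+3 \le r$, which is why the hypothesis $r \ge 6$ is imposed; I would check that the reflections indeed reach $a_{k+3}$ and $c_{k+1}$ before invoking $I$. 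Once the orbit structure is confirmed, the conclusion is immediate by induction on the length of a connecting word, using Lemma~\ref{lem:Property_Dehn_Twist} at each step and the fact that a product of involutions conjugated by an involution, and the inverse of such a product, are again products of $\sigma$, $\tau$, $\rho_1$, $I$.
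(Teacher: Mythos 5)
Your proposal is correct and follows essentially the same route as the paper: start from $t_{a_1}=\tau\rho_1$, propagate along the $a$-, $b$-, and $c$-chains by conjugating with the shift $R=\tau\sigma$ (your ``orbit under $\langle\sigma,\tau\rangle$'' observation), and use the cross-links $I(a_{k+3})=c_{k+1}$, $I(c_k)=b_k$, $I(b_1)=d_1$, $I(b_2)=d_2$ to reach the remaining twists, with the sign $\epsilon=\pm1$ from Lemma~\ref{lem:Property_Dehn_Twist} absorbed harmlessly. The paper's proof is exactly this argument made explicit, recording $R$'s action on each chain and writing, e.g., $t_{c_{k+1}}=It_{a_{k+3}}^{-1}I$ and $t_{b_k}=It_{c_k}^{-1}I$.
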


\begin{proof}

    Let R be the product $\tau \sigma$. We can see that $R$ acts as following by Figure.~\ref{fig:reflection_sigma} and Figure.~\ref{fig:reflection_tau}.
    \[ (1) R(a_1) = a_2, R(a_2) = a_3, \ldots, R(a_k)=a_{k+1}, R(a_{k+1}) = a_{k+2}, \ldots, R(a_{r-1}) = a_r.\]
    \[ (2) R(b_1) = b_2, R(b_2) = b_3, \ldots, R(b_k)=b_{k+1}, R(b_{k+1}) = b_{k+2}, \ldots, R(b_{r-1}) = b_r.\]
    \[ (3) R(c_1) = c_2, R(c_2) = c_3, \ldots, R(c_k)=c_{k+1}, R(c_{k+1}) = c_{k+2}, \ldots, R(c_{r-2}) = c_{r-1}. \]
    Clearly, we can see that $t_{a_1}$ is a product of $\tau$ and $\rho_1$. By $(1)$ and Lemma~\ref{lem:Property_Dehn_Twist},
    \[ t_{a_i} = R t_{a_{i-1}} R^{-1}. (i=2,3,\ldots,r) \]
    So $t_{a_1},t_{a_2},\ldots,t_{a_r}$ are products of $\sigma,\tau$, and $\rho_1$.\\

    By construction of $I$ and Lemma~\ref{lem:Property_Dehn_Twist}, we have
    \[ t_{c_{k+1}} = I t_{a_{k+3}} ^{-1} I.\]
    By $(3)$ and Lemma \ref{lem:Property_Dehn_Twist}, we see that
    \[ t_{c_j} = R t_{c_{j-1}} R^{-1}, \hspace{4pt} (j=k+2,k+3,\ldots,r-1) \]
    \[ t_{c_j} = R^{-1} t_{c_{j+1}} R. \hspace{4pt} (j=1,2,\ldots,k) \]
    Hence, $t_{c_1},t_{c_2},\ldots,t_{c_{r-1}}$ are products of $\sigma,\tau,\rho_1$, and $I$.\\

    Also, we have
    \[ t_{{b}_k} = I t_{c_k}^{-1} I.\]
    Similar to the above, by $(2)$ and Lemma \ref{lem:Property_Dehn_Twist}, we see that
    \[ t_{b_i} = R t_{b_{i-1}} R^{-1}, \hspace{4pt} (i=k+1,k+3,\ldots,r) \]
    \[ t_{b_i} = R^{-1} t_{b_{i+1}} R. \hspace{4pt} (i=1,2,\ldots,k-1) \]
    Hence, $t_{b_1},t_{b_2},\ldots,t_{b_r}$ are product of $\sigma,\tau,\rho_1$, and $I$.\\

    Finally, Since $I(b_1) = d_1$ and $I(b_2) = d_2$, we have
    \[ t_{d_1} = I t_{b_1} ^{-1} I , t_{d_2} = I t_{b_2} ^{-1} I.\]
    $t_{d_1}$ and $t_{d_2}$ are products of $\sigma,\tau,\rho_1$, and $I$.
\end{proof}

$\tau$ maps $\alpha _1$ to itself but reverses the orientation of $\alpha _1$. By Lemma~\ref{lemma:Property_puncture_slide}, we see that
\[ \tau v_1 \tau = v_1^{-1}. \]
Now let $\rho_2$ denote a product of $\tau v_1$.Then $\rho_2$ is a involution.

\begin{figure}[ht]
    \begin{center}
        \includegraphics[scale=0.8,clip]{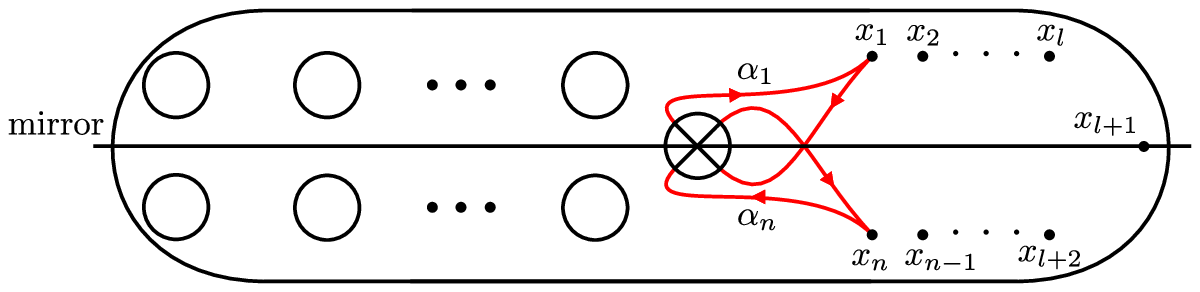}
        \hspace{0.3cm}
        \includegraphics[scale=0.8,clip]{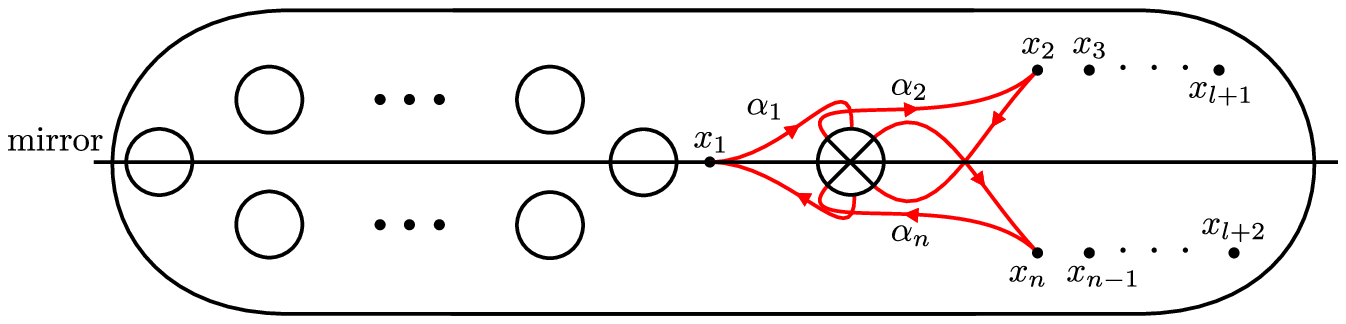}
    \end{center}
    \caption{Involutions $\sigma$ and $\tau$}
    \label{fig:involution_sigma_tau_other_view}
\end{figure}

\begin{lem}
    \label{lem:puncture_slide_generate}
    Puncture slides $v_i  (i=1,2,\ldots,n)$ is a products of involutions $\sigma,\tau$ and $\rho_2$.
\end{lem}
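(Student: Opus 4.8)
The plan is to imitate the proof of Lemma~\ref{lem:dehn_twist_generate1}: first express $v_1$ directly in terms of two of the three involutions, and then transport $v_1$ onto the remaining punctures by conjugating with a suitable product of $\sigma$ and $\tau$. For the first step, note that $\rho_2 = \tau v_1$ and that $\tau$ is an involution; hence $v_1 = \tau\rho_2$, so $v_1$ is already a product of $\tau$ and $\rho_2$.

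For the second step I would read off from Figure~\ref{fig:involution_sigma_tau_other_view} (together with Figures~\ref{fig:reflection_sigma}, \ref{fig:reflection_tau} and \ref{fig:curves_alpha_and_beta}) the action of $R = \tau\sigma$ on the punctures $x_1,\dots,x_n$ and on the one-sided curves $\alpha_1,\dots,\alpha_n$. Just as $R$ shifts each of the families $\{a_i\}$, $\{b_i\}$ and $\{c_i\}$ one step along the surface in the proof of Lemma~\ref{lem:dehn_twist_generate1}, it should carry $x_i$ to $x_{i+1}$ and, compatibly, $\alpha_i$ to $\alpha_{i+1}$; if the two halves of the deformed surface are not symmetric enough for a single $R$ to reach every puncture starting from $x_1$, one applies $R$ on one range of indices and $R^{-1}$ on the complementary range, exactly as before. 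Granting this, Lemma~\ref{lemma:Property_puncture_slide} shows that $R v_i R^{-1}$ is the puncture slide of $R(x_i) = x_{i+1}$ along $R(\alpha_i) = \alpha_{i+1}$, i.e.\ $v_{i+1}$ (the direction in which the puncture is pushed is immaterial for generation, so any sign ambiguity here is harmless). Since $v_1 \in \langle \tau, \rho_2\rangle$ and $R = \tau\sigma \in \langle \sigma, \tau\rangle$, induction on $i$ then gives that every $v_i$ is a product of $\sigma$, $\tau$ and $\rho_2$.

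The only genuine work is the picture-chasing in the second step: one must verify carefully that $R$ sends $x_i$ to $x_{i+1}$ and $\alpha_i$ precisely to $\alpha_{i+1}$ (and not merely to some one-sided curve based at $x_{i+1}$), and one must cope with the asymmetry between the two sides of the deformed surface — the same technical nuisance that forced the use of both $R$ and $R^{-1}$ (and, in the Dehn-twist case, of the extra involution $I$) in the proof of Lemma~\ref{lem:dehn_twist_generate1}. Here, since the $\alpha_i$ form a single family, I would expect $R$ and $R^{-1}$ to suffice without any analogue of $I$. Everything else is a formal consequence of Lemma~\ref{lemma:Property_puncture_slide} and the already-noted fact that $\rho_2$ is an involution.
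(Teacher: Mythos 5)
Your proposal matches the paper's proof: the paper writes $v_1=\tau\rho_2$ and then uses $(4)$, namely $R(\alpha_i)=\alpha_{i+1}$ for $R=\tau\sigma$, together with Lemma~\ref{lemma:Property_puncture_slide} to get $v_j=Rv_{j-1}R^{-1}$ for $j=2,\dots,n$. Your worry about needing $R^{-1}$ on part of the index range does not arise here --- a single $R$ shifts the whole family $\alpha_1,\dots,\alpha_n$ --- but otherwise the argument is the same.
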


\begin{proof}
    $v_1$ is a product of $\tau$ and $\rho _2$. In Figure.~\ref{fig:involution_sigma_tau_other_view}, we fucus the figures which define $\sigma$ and $\tau$ on $\alpha _i$. $R = \tau \sigma$ acts on $\alpha _i$ as follow.
    \[(4)~ R(\alpha _1) = \alpha_2, R(\alpha_2) = \alpha_3, \ldots, R(\alpha_l)=\alpha_{l+1}, R(\alpha_{l+1}) = \alpha_{l+2}, \ldots, R(\alpha_{n-1}) = \alpha_n. \]

    By $(4)$ and Lemma~\ref{lemma:Property_puncture_slide}, we see that
    \[ v_j = R v_{j-1} R^{-1}. \hspace{4pt} (j=2,3,\ldots,n) \]
    Hence, $v_i$ is a product of involution $\sigma,\tau$ and $\rho_2$.
\end{proof}

We consider the diffeomorphism $\Phi$ on $N_{g,n}$ which satisfies $\Phi y \Phi^{-1} = Y_{m,a}$ and fixes each punctures. The bottom figure in Figure.~\ref{fig:involution_W} gives the involution $w$. Since $w$ fixes $m$ and $a$ but reverses the orientation of $m$ and $a$, we can see that $w Y_{m,a} w = Y_{m^{-1},a^{-1}} = Y_{m,a} ^{-1}$.

\begin{figure}[ht]
    \begin{center}
        \includegraphics[scale=1.0,clip]{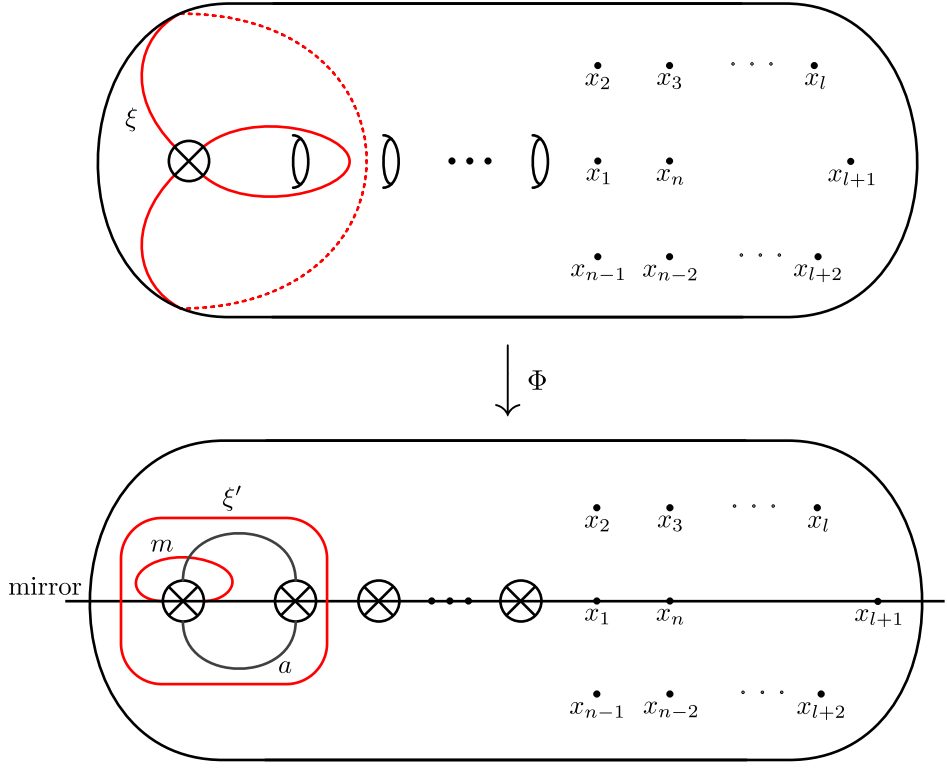}
    \end{center}
    \caption{Diffeomorphism $\Phi$}
    \label{fig:involution_W}
\end{figure}

Let $W$ be a product of $\Phi ^{-1} w \Phi$ and let $\rho _3$ be a product of $W y$. Clearly, we can see that $W$ is an involution. Since we have
\begin{eqnarray*}
    WyW &=& \Phi ^{-1} w  ( \Phi y \Phi ^{-1}) w \Phi \\
    &=& \Phi ^{-1} (w Y_{m,a} w) \Phi \\
    &=& \Phi ^{-1} Y_{m,a}^{-1} \Phi = y^{-1},
\end{eqnarray*}
$\rho_3$ is a involution. So we can get the following lemma.

\begin{lem}
    \label{lem:y_homeo_generate}

    The Y-homeomorphism $y$ is the product of involutions $W$ and $\rho_3$.

\end{lem}

We need the another involution to generate $t_{e_1},t_{e_2},\ldots,t_{e_{n-1}}$. Figure.~\ref{fig:involution_J} gives the involution $J$ which is a reflection in the mirror.

\begin{figure}[ht]
    \begin{center}
        \includegraphics[scale=0.9,clip]{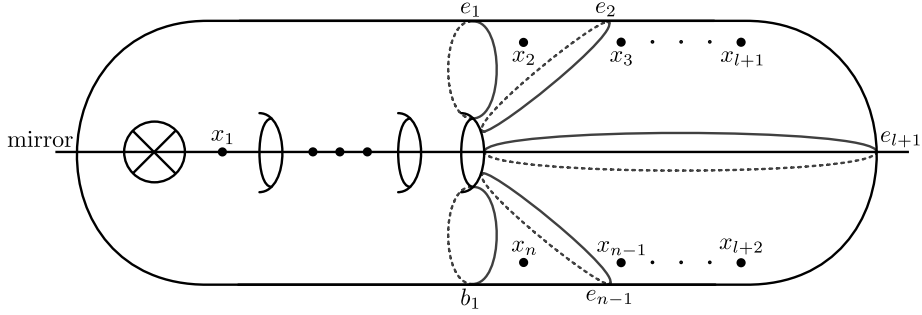}
    \end{center}
    \caption{Involution $J$}
    \label{fig:involution_J}
\end{figure}

\begin{lem}
    \label{lem:dehn_twist_generate2}
    $t_{e_1},t_{e_2},\ldots,t_{e_{n-1}}$ are products of involutions $\sigma,\tau,I,J$ and $\rho_1$.
\end{lem}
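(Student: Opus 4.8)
The plan is to mimic the structure already used for the chains $\{a_i\}$, $\{b_i\}$, $\{c_i\}$, $\{\alpha_i\}$ in the previous lemmas, but now for the curves $\{e_i\}$, which encircle the punctures. First I would use the involution $R = \tau\sigma$ to move along the $e$-chain: by inspecting Figures~\ref{fig:reflection_sigma} and~\ref{fig:reflection_tau} one should read off an action of the form $R(e_i) = e_{i+1}$ for $i = 1, 2, \ldots, n-2$ (possibly with the chain split at the symmetry axis, exactly as the $b$- and $c$-chains were split into an ascending and a descending part in Lemma~\ref{lem:dehn_twist_generate1}). Granting this, Lemma~\ref{lem:Property_Dehn_Twist} gives $t_{e_j} = R\, t_{e_{j-1}}\, R^{-1}$ (and the mirrored relation for the other half of the chain), so all of $t_{e_1}, \ldots, t_{e_{n-1}}$ are products of $\sigma$, $\tau$ and a single seed twist $t_{e_{i_0}}$ together with $I$ and the other generators inherited from Lemma~\ref{lem:dehn_twist_generate1}.

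The remaining task is therefore to express one seed twist, say $t_{e_1}$, as a product of $\sigma, \tau, I, J, \rho_1$. Here the new involution $J$ of Figure~\ref{fig:involution_J} enters: $J$ is a reflection in a mirror chosen so that it carries some curve already known to be handled — the natural candidate is that $J$ swaps $e_1$ with one of the $d$-curves or $a$-curves (whose twists are products of $\sigma, \tau, \rho_1, I$ by Lemma~\ref{lem:dehn_twist_generate1}), or that $J$ fixes $e_1$ reversing a neighboring orientation so that $J\,(\text{known twist})$ is an involution whose product with $J$ is $t_{e_1}^{\pm 1}$. Concretely I would argue: from Figure~\ref{fig:involution_J}, $J(d_1) = e_1$ (or the analogous relation visible in the picture), hence by Lemma~\ref{lem:Property_Dehn_Twist}, $t_{e_1} = J\, t_{d_1}^{\pm 1}\, J$; since $t_{d_1}$ is a product of $\sigma, \tau, \rho_1, I$ by Lemma~\ref{lem:dehn_twist_generate1}, so is $t_{e_1}$ once we adjoin $J$. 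Then feed $t_{e_1}$ into the $R$-conjugation chain above.

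I would write the proof in three short movements: (1) record the action of $R = \tau\sigma$ on the curves $e_1, \ldots, e_{n-1}$ as read from the two mirror pictures, splitting into an ascending and descending part about the symmetry axis if necessary; (2) use Figure~\ref{fig:involution_J} to identify which curve $J$ maps $e_1$ to, and conclude $t_{e_1}$ is a product of $\sigma, \tau, I, J, \rho_1$ via Lemmas~\ref{lem:Property_Dehn_Twist} and~\ref{lem:dehn_twist_generate1}; (3) conjugate by powers of $R$ to obtain all $t_{e_i}$, invoking Lemma~\ref{lem:Property_Dehn_Twist} each time, noting that $R = \tau\sigma$ acts as an orientation-preserving map on each $N(e_i)$ so that no sign issues arise in the chain (any sign on the seed twist is harmless since $t_{e_1}$ and $t_{e_1}^{-1}$ generate the same group element up to the already-available inverses).

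The main obstacle is step (2): it is purely a figure-reading matter, but one must be careful that the mirror defining $J$ is actually compatible with the chosen local orientations of the $e_i$ and $d_i$ (equivalently $c_i$) so that the exponent $\epsilon$ in Lemma~\ref{lem:Property_Dehn_Twist} is tracked correctly, and that $J$ genuinely restricts to an involution of the subsurface containing these curves — i.e. that it is a well-defined element of $\CN_{g,n}$ as asserted. A secondary subtlety is the parity/indexing of the $e$-chain when $n$ is small or when the symmetry axis of $R$ passes through a puncture rather than between two punctures; this is handled exactly as the $b$- and $c$-chains were, by writing the two half-chain recursions separately, and it does not affect which involutions appear in the final list.
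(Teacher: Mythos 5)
Your overall architecture (produce one seed twist $t_{e_1}$ by conjugating an already-known twist by $J$, then walk along the $e$-chain by conjugating with a ``translation'' built from two of the listed involutions) is exactly the architecture of the paper's proof, and your seed step matches it: the paper writes $t_{e_1}=J\,t_{n_1}^{-1}J$ for a curve carried to $e_1$ by the reflection $J$, just as you propose with $J(d_1)=e_1$. The gap is in your propagation step. You take the translation to be $R=\tau\sigma$ and assert, without being able to check it, that $R(e_i)=e_{i+1}$. The paper does \emph{not} use $R$ here: it introduces a new element $T=JI$ and verifies separately that $T(e_{i})=e_{i+1}$ for $i=1,\dots,n-2$, then sets $t_{e_i}=T\,t_{e_{i-1}}T^{-1}$. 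The fact that the author lists the $R$-action only on the $a$-, $b$-, $c$-chains (and on the $\alpha_i$ for the puncture slides), and goes to the trouble of building $T$ just for the $e$-curves, is strong evidence that $R$ does not permute the $e_i$ at all --- most plausibly because the $e_i$ are not placed symmetrically with respect to the mirrors defining $\sigma$ and $\tau$ in the way the $a,b,c$ chains are (e.g.\ if the $e_i$ are nested about an increasing number of punctures rather than each enclosing a pair of adjacent punctures, the cyclic shift of the punctures induced by $R$ cannot carry $e_i$ to $e_{i+1}$). Your hedge about ``splitting the chain at the symmetry axis'' does not repair this: the issue is not where the chain breaks but that $R$ need not carry any $e_i$ to any $e_j$.

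Since your argument hinges entirely on that unverified figure-reading claim, it does not go through as written; the fix is precisely the paper's: replace $R$ by $T=JI$ in step (3), check the action $(5)$ of $T$ on the $e$-chain, and note that this is also why $I$ must appear in the list of involutions independently of its role in producing $t_{d_1}$. Everything else in your plan (the use of Lemma~\ref{lem:Property_Dehn_Twist} with attention to the sign $\epsilon$, and the reduction of the seed to Lemma~\ref{lem:dehn_twist_generate1}) is consistent with the paper.
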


\begin{proof}

    Since we have $J(n_1) = e_1$, $t_{e_1} = J t_{n_1} ^{-1} J$.
    $t_{e_1}$ is the product of $\sigma$, $\tau$, $I$, $J$, $\rho_1$.
    Let $T$ denote the product of $JI$. We see that $T$ acts as following.
    \[ (5) T(e_1)=e_2, T(e_2)=e_3,\ldots,T(e_l)=e_{l+1},T(e_{l+1})=e_{l+2},\ldots, T(e_{n-2})=e_{n-1}. \]
    Hence, for $(i=2,3,\ldots,n-1)$, we can see that $t_{e_{i}}$ = $T t_{e_{i-1}} T^{-1}$. So, $t_{e_i}$ is a product of $\sigma,\tau,I,J $ and $\rho_1$.
\end{proof}

Let the subgroup $G$ of $\CN _{g,n}$ be generated by $\sigma,\tau,W,I,J,\rho_1,\rho_2$ and $\rho_3$.

\begin{proof}[Proof of Theorem \ref{thm:main} for genus $g=2r+1$]
    We see that $G$ contains $\PM _{g,n}$ since all Korkmaz's generators for $\PM _{g,n}$ are in $G$ by Lemma~\ref{lem:dehn_twist_generate1},~\ref{lem:puncture_slide_generate},~\ref{lem:y_homeo_generate} and ~\ref{lem:dehn_twist_generate2}.

    When we consider the actions of $\sigma,\tau$ and $W$ on the punctures, we can see that
    \[ \pi (\sigma) = (1,n)(2,n-1)\ldots (l,l+2)(l+1), \]
    \[ \pi (\tau) = (2,n)(3,n-1)\ldots (l+1,l+2)(1), \]
    \[ \pi (W) = (2,n-1)(3,n-2)\ldots (l,l+2)(1)(l+1)(n).\]

    By the following lemma, the restriction $\pi \mid _G : G \to \mathrm{Sym}_n$ is a surjection.
    Hence, we can see that $G = \CN _{g,n}$ by Lemma~\ref{lemma:main_tool_of_proof_main_theorem_two}.
\end{proof}

\begin{lem}
    The group $\mathrm{Sym}_n$ is generated by following elements,
    \[ r_1 = (1,b)(2,n-1)\ldots (l,l+2)(l+1), \]
    \[ r_2 = (2,b)(3,n-1)\ldots (l+1,l+2)(1), \]
    \[ r_3 = (2,n-1)(3,n-2)\ldots (l,l+2)(1)(l+1)(n). \]
\end{lem}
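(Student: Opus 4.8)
The plan is to show that the three permutations $r_1, r_2, r_3$ generate $\mathrm{Sym}_n$ by a direct computation with cycles. First I would observe that $r_1$ and $r_2$ are both products of transpositions of a very specific ``mirror'' type: $r_1$ pairs $i$ with $n+1-i$ (fixing the middle letter $l+1$ when $n=2l+1$), while $r_2$ pairs $i+1$ with $n+1-i$ (fixing the letter $1$). Computing the product $r_2 r_1$ should give a single $n$-cycle (or a cycle of length $n-1$ together with a fixed point), exactly as in the proof of Lemma~\ref{lem:dehn_twist_generate1}, where $R=\tau\sigma$ was shown to cyclically permute the $a_i$'s; here the analogous statement is that $r_2 r_1$ cyclically shifts the punctures. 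So $\langle r_1, r_2\rangle$ already contains an $n$-cycle $c$ and the involution $r_1$.

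Next I would use the classical fact that a long cycle together with a suitable transposition generates the full symmetric group. If $r_2 r_1$ is an honest $n$-cycle $c = (j_1, j_2, \ldots, j_n)$ and $r_1$ contains a transposition of two cyclically-adjacent entries of $c$, then $\langle c, r_1\rangle = \mathrm{Sym}_n$. If instead $r_2r_1$ is an $(n-1)$-cycle fixing one point, I would bring in $r_3$: one checks from the explicit formula that $r_3$ moves the point fixed by $r_2r_1$ into the support, so that $\langle r_1, r_2, r_3\rangle$ acts transitively on all $n$ letters and contains a transposition, hence is all of $\mathrm{Sym}_n$. The parity bookkeeping (the two cases $n$ even versus $n$ odd, and correspondingly the number of $2$-cycles appearing in each $r_i$) is where one must be careful, since the displayed formulas are written only for the odd case $n = 2l+1$; I would handle each parity separately, possibly splitting further into small residue cases, and note that a transposition is available because $r_1$ and $c$ have different parities exactly when needed.

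The main obstacle I anticipate is purely combinatorial: verifying precisely which permutation $r_2 r_1$ (and, if needed, products involving $r_3$) equals, keeping track of fixed points, and then exhibiting a transposition inside the generated subgroup. This is routine but fiddly, because the answer depends on $n \bmod 2$ (and the notation conflates $b$ and $n$). Once a transposition and an $(n$- or $(n-1))$-cycle covering all letters are in hand, the conclusion is immediate from the standard generation lemma for $\mathrm{Sym}_n$, and the lemma follows by matching $r_1 \mapsto \pi(\sigma)$, $r_2 \mapsto \pi(\tau)$, $r_3 \mapsto \pi(W)$, which is exactly what is needed to finish the proof of Theorem~\ref{thm:main} in the odd-genus case.
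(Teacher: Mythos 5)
The paper itself states this lemma without proof, so there is nothing to match your argument against; judged on its own terms, your proposal has a genuine gap at its central step. You correctly identify that $r_2 r_1$ is the $n$-cycle $c=(1,2,\ldots,n)$ (indeed $r_1\colon i\mapsto n+1-i$ and $r_2\colon i\mapsto n+2-i$ for $i\neq 1$, so $r_2r_1\colon i\mapsto i+1$ cyclically). But your next claim --- that if ``$r_1$ contains a transposition of two cyclically-adjacent entries of $c$'' then $\langle c, r_1\rangle = \mathrm{Sym}_n$ --- is false. The classical generation fact requires an actual transposition, not an involution one of whose disjoint $2$-cycles happens to be adjacent. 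Here $r_1$ and $r_2$ are both reflections of the $n$ letters arranged on a circle, so $\langle r_1, r_2\rangle$ is a dihedral group of order $2n$, a proper subgroup of $\mathrm{Sym}_n$ for $n\geq 4$. Since $r_2r_1$ \emph{is} an honest $n$-cycle in this setting, your case analysis would route you into exactly this false branch and you would never invoke $r_3$; the ``different parities give a transposition'' remark at the end does not rescue this, as differing parity does not produce a transposition in the generated subgroup.

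The repair is to use $r_3$ precisely to manufacture a genuine transposition. Writing $n=2l+1$, one checks that $r_3$ is the reversal of $\{2,\ldots,n-1\}$ fixing $1$ and $n$, while $r_1$ is the reversal of $\{1,\ldots,n\}$; hence
\[ r_1 r_3 = (1,n), \]
since $r_1r_3$ fixes every $i$ with $2\leq i\leq n-1$ and swaps $1$ and $n$. Now $(1,n)$ is a transposition of two cyclically adjacent entries of the $n$-cycle $r_2r_1=(1,2,\ldots,n)$, and the standard lemma (an $n$-cycle together with a transposition of adjacent entries generates $\mathrm{Sym}_n$) finishes the proof. With this correction the overall strategy --- produce a long cycle from $r_1,r_2$ and a transposition from the third generator --- is sound and is surely what the author intended; the analogous computation handles the even-genus lemma.
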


\subsection{In the case of even genus}
In this section, We suppose that $g=2r+2$. Similar to odd case, let us consider the two models of $N_{g,n}$ as shown in Figure.~\ref{fig:reflection_sigma_even} and \ref{fig:reflection_tau_even}. (In these pictures, we will suppose that $r=2k+1$ and the number of punctures $b=2l$ is even.) Each pictures gives an involution of the $N_{g,n}$, which is the reflection in the mirror.

\begin{figure}[ht]
    \begin{center}
        \includegraphics[scale=0.80,clip]{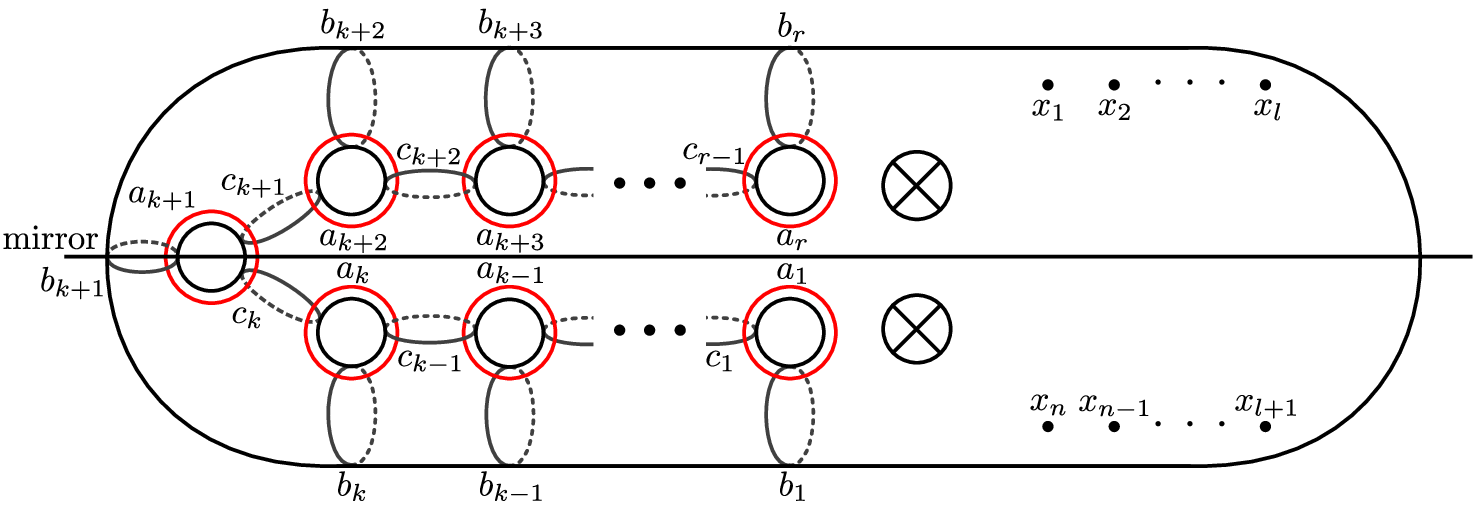}
    \end{center}
    \caption{Involution $\sigma : N_{g,n} \to N_{g,n}$}
    \label{fig:reflection_sigma_even}
\end{figure}

\begin{figure}[ht]
    \begin{center}
        \includegraphics[scale=0.9,clip]{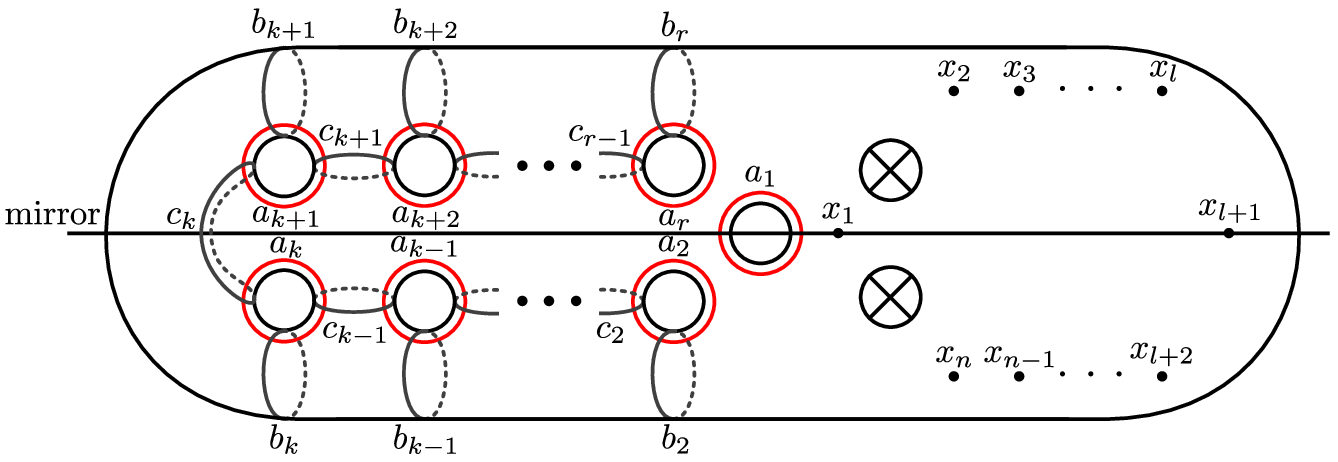}
    \end{center}
    \caption{Involution $\tau : N_{g,n} \to N_{g,n}$}
    \label{fig:reflection_tau_even}
\end{figure}

We will construct third involution $I$. We cut the surface $N_{g,n}$ along $a_{k+3} \cup b_k \cup c_k \cup c_{k+1} \cup x$ to obtain the surfaces $S_1$ and $S_2$.(see Figure.\ref{fig:cut_curves_for_I_even_case}) ~$S_1$ is a sphere bounded by $a_{k+3} \cup b_k \cup c_k \cup c_{k+1} \cup x$ and $S_2$ is a non-orientable surface of genus $g-8$ with $b$ punctures and $5$ boundaries. Figure.\ref{fig:Involution_I_even_case} gives the involutions $\overline{I}$ and $\widetilde{I}$ on $S_1$ and $S_2$, respectively. Since $\overline{I}$ and $\widetilde{I}$ coincide on the boundaries, they naturally define an involution $I:N_{g,n} \to N_{g,n}$.

\begin{figure}[ht]
    \begin{center}
        \includegraphics[scale=1.0,clip]{involution_i_cut_curves.eps}
    \end{center}
    \caption{The curves $a_{k+3}, b_k, c_k, c_{k+1}$ and $x$}
    \label{fig:cut_curves_for_I_even_case}
\end{figure}

\begin{figure}[ht]
    \begin{center}
        \includegraphics[scale=0.9,clip]{involution_i_bar_odd_case}
        \hspace{0.5cm}
        \includegraphics[scale=0.9,clip]{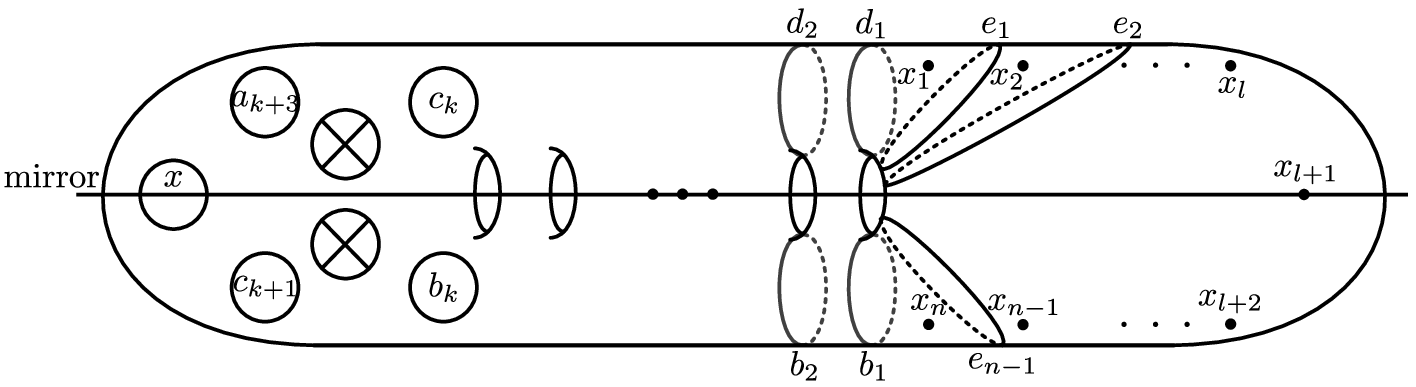}
    \end{center}
    \caption{involutions $\overline{I}$ and $\widetilde{I}$}
    \label{fig:Involution_I_even_case}
\end{figure}

From the construction of $I$, we see the following:
\[ I (a_{k+3}) = c_{k+1} , I(c_k) = b_k, \]
\[ I (b_1) = d_1 , I (b_2) = d_2. \]

Let $\rho _1$ be the product $\tau t_{a_1}$. As in the odd genus case, $\rho _1$ is an involution.
We will prepare three involutions to prove following Lemma. Figure~.\ref{fig:involution_J_even} gives the involution $J$ which is a reflection in the mirror. Let $\rho _4$ and $\rho_5$ be the products $J t_{b_{r+1}}$ and $J t_{c_{r}}$, respectively. We can found that $\rho _4$ and $\rho_5$ are involutions.

\begin{figure}[ht]
    \begin{center}
        \includegraphics[scale=0.9,clip]{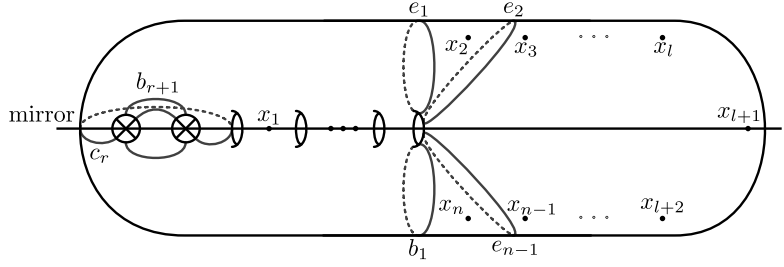}
    \end{center}
    \caption{Involution $J$}
    \label{fig:involution_J_even}
\end{figure}

\begin{lem}
    \label{lem:dehn_twist_generate_even_case}
    Dehn twists $t_{a_1}$, $t_{a_2}$, $\ldots$, $t_{a_r}$, $t_{b_1}$, $t_{b_2}$, $t_{b_{r+1}}$, $t_{c_1}$, $t_{c_2}$, $\ldots$, $t_{c_{r}}$, $t_{d_1}$, $t_{d_2}$, $t_{e_1}$, $t_{e_2}$, $\ldots$, $t_{e_{n-1}}$ are products of involutions $\sigma$, $\tau$, $\rho_1$, $\rho_4$, $\rho_5$, $I$, and $J$.
\end{lem}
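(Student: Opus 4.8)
The plan is to adapt the reflection-chain argument of the odd-genus case (Lemmas~\ref{lem:dehn_twist_generate1} and~\ref{lem:dehn_twist_generate2}) to the even-genus model of Figures~\ref{fig:reflection_sigma_even} and~\ref{fig:reflection_tau_even}, and to treat the two ``extra'' curves $b_{r+1}$ and $c_r$ --- which lie near the two crosscaps and hence fall outside the reflection chains --- by hand, using the new involutions $\rho_4$ and $\rho_5$. First I would record the action of $R = \tau\sigma$ on the curves of $\Lambda'$: reading off the two figures, $R(a_i) = a_{i+1}$, $R(b_i) = b_{i+1}$ and $R(c_i) = c_{i+1}$ on the appropriate ranges of indices, and on a regular neighbourhood of each of these curves the composite of the two reflections carries the chosen local orientation to the chosen one. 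Hence Lemma~\ref{lem:Property_Dehn_Twist} gives $t_{a_i} = R\,t_{a_{i-1}}\,R^{-1}$, and likewise for the $b$'s and $c$'s; should any such conjugation instead produce the inverse of a twist, this is harmless, since the inverse of a product of involutions is again one. As $\rho_1 = \tau t_{a_1}$ gives $t_{a_1} = \tau\rho_1$, iterating along the $a$-chain expresses $t_{a_1},\dots,t_{a_r}$ as products of $\sigma,\tau,\rho_1$.

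Next I would bring in $I$. By construction $I(a_{k+3}) = c_{k+1}$, $I(c_k) = b_k$, $I(b_1) = d_1$ and $I(b_2) = d_2$, the hypothesis $g \ge 14$ (so $r \ge 6$) ensuring that these indices stay in range and that the cut surface $S_2$ is a genuine surface. By Lemma~\ref{lem:Property_Dehn_Twist}, $t_{c_{k+1}} = I\,t_{a_{k+3}}^{\pm1}\,I$; conjugating up and down the $c$-chain by $R^{\pm1}$ then yields $t_{c_1},\dots,t_{c_{r-1}}$. Likewise $t_{b_k} = I\,t_{c_k}^{\pm1}\,I$, and conjugating down the $b$-chain by $R^{-1}$ gives $t_{b_1}$ and $t_{b_2}$, whence $t_{d_1} = I\,t_{b_1}^{\pm1}\,I$ and $t_{d_2} = I\,t_{b_2}^{\pm1}\,I$. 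Each of these is a product of $\sigma,\tau,\rho_1,I$.

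The curves $b_{r+1}$ and $c_r$ are then handled directly: from Figure~\ref{fig:involution_J_even}, $J$ fixes each of them and reverses its chosen local orientation, so (just as for $\rho_1$) the products $\rho_4 = J\,t_{b_{r+1}}$ and $\rho_5 = J\,t_{c_r}$ square to the identity, and $t_{b_{r+1}} = J\rho_4$, $t_{c_r} = J\rho_5$. For the puncture curves, as in the odd case (Lemma~\ref{lem:dehn_twist_generate2}) there is a curve $n_1$ with $J(n_1) = e_1$ and with $t_{n_1}$ already among the twists obtained above; then $t_{e_1} = J\,t_{n_1}^{\pm1}\,J$, and, putting $T = JI$, one checks from the figures that $T(e_i) = e_{i+1}$, so that $t_{e_i} = T\,t_{e_{i-1}}\,T^{-1}$ for $i = 2,\dots,n-1$. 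Collecting everything, each Dehn twist in the statement is a product of the involutions $\sigma,\tau,\rho_1,\rho_4,\rho_5,I,J$.

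The substance of the argument, and the only part that genuinely requires inspecting the even-genus figures, is twofold: first, confirming that the chosen orientations of curve-neighbourhoods are compatible with the claimed reflection chains, so that the exponents appearing in Lemma~\ref{lem:Property_Dehn_Twist} are as written above (or, when they are not, that they yield a twist's inverse rather than something unrelated), and that $\tau$ and $J$ really do fix $a_1$, $b_{r+1}$, $c_r$ with orientation-reversing restriction, so that $\rho_1,\rho_4,\rho_5$ are indeed involutions; second, verifying that the three $R$-chains together with the images of curves under $I$ and $J$ exhaust all indices occurring in $\Lambda'$, with $b_{r+1}$ and $c_r$ being exactly the two curves that escape the chains. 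Apart from that, the proof is the same mechanical chain-conjugation bookkeeping as in the odd-genus case.
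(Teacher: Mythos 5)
Your proposal is correct and follows essentially the same route as the paper's own proof: the $R=\tau\sigma$ chain conjugations for the $a_i$, $b_i$, $c_i$, the relations $t_{c_{k+1}}=I t_{a_{k+3}}^{-1}I$, $t_{b_k}=I t_{c_k}^{-1}I$, $t_{d_j}=I t_{b_j}^{-1}I$, and the direct expressions $t_{b_{r+1}}=J\rho_4$, $t_{c_r}=J\rho_5$. In fact your write-up is slightly more complete than the paper's, since you explicitly treat the twists $t_{e_1},\ldots,t_{e_{n-1}}$ (via $J(n_1)=e_1$ and $T=JI$, as in Lemma~\ref{lem:dehn_twist_generate2}), which appear in the statement of Lemma~\ref{lem:dehn_twist_generate_even_case} but are not addressed in the paper's written proof of the even-genus case.
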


\begin{proof}
    Let R be the product $\tau \sigma$. We can see that $R$ acts as following by Figure.~\ref{fig:reflection_sigma_even} and Figure.~\ref{fig:reflection_tau_even}.
    \[ (1) R(a_1) = a_2, R(a_2) = a_3, \ldots, R(a_k)=a_{k+1}, R(a_{k+1}) = a_{k+2}, \ldots, R(a_{r-1}) = a_r.\]
    \[ (2) R(b_1) = b_2, R(b_2) = b_3, \ldots, R(b_k)=b_{k+1}, R(b_{k+1}) = b_{k+2}, \ldots, R(b_{r-1}) = b_r.\]
    \[ (3) R(c_1) = c_2, R(c_2) = c_3, \ldots, R(c_k)=c_{k+1}, R(c_{k+1}) = c_{k+2}, \ldots, R(c_{r-2}) = c_{r-1}. \]
    Clearly, we can see that $t_{a_1}$ is a product of $\tau$ and $\rho_1$. By $(1)$ and Lemma~\ref{lem:Property_Dehn_Twist},
    \[ t_{a_i} = R t_{a_{i-1}} R^{-1}. (i=2,3,\ldots,r) \]
    So $t_{a_1},t_{a_2},\ldots,t_{a_r}$ are products of $\sigma,\tau$, and $\rho_1$.\\

    By construction of $I$ and Lemma~\ref{lem:Property_Dehn_Twist}, we have
    \[ t_{c_{k+1}} = I t_{a_{k+3}} ^{-1} I.\]
    By $(3)$ and Lemma \ref{lem:Property_Dehn_Twist}, we see that
    \[ t_{c_j} = R t_{c_{j-1}} R^{-1}, \hspace{4pt} (j=k+2,k+3,\ldots,r-1) \]
    \[ t_{c_j} = R^{-1} t_{c_{j+1}} R. \hspace{4pt} (j=1,2,\ldots,k) \]
    Hence, $t_{c_1},t_{c_2},\ldots,t_{c_{r-1}}$ are products of $\sigma,\tau,\rho_1$, and $I$.\\

    Also, we have
    \[ t_{{b}_k} = I t_{c_k}^{-1} I\].
    Similar to the above, by $(2)$ and Lemma \ref{lem:Property_Dehn_Twist}, we see that
    \[ t_{b_i} = R t_{b_{i-1}} R^{-1}, \hspace{4pt} (i=k+1,k+3,\ldots,r) \]
    \[ t_{b_i} = R^{-1} t_{b_{i+1}} R. \hspace{4pt} (i=1,2,\ldots,k-1) \]
    Hence, $t_{b_1},t_{b_2},\ldots,t_{b_r}$ are product of $\sigma,\tau,\rho_1$, and $I$.\\

    By the constructions about $\rho_4$ and $\rho_5$,we have $t_{b_{r+1}} = J\rho_4$ and $t_{c_r} = J \rho_5$.

    Since $I(b_1) = d_1$ and $I(b_2) = d_2$, we have
    \[ t_{d_1} = I t_{b_1} ^{-1} I , t_{d_2} = I t_{b_2} ^{-1} I.\]
    $t_{d_1}$ and $t_{d_2}$ are products of $\sigma,\tau,\rho_1$, and $I$.
\end{proof}

We want to generate puncture slides $v_1,v_2,\ldots,v_n$ and $w_1,w_2,\ldots,w_n$ by involutions.
we will construct an involution $K$ which fixes $\alpha_1$ and reverses the orientation of $\alpha_1$. The involution $K$ is a reflection in the mirror in Figure.~\ref{fig:involution_k_even}. Let $\rho_2$ be the product $Kv_1$.

\begin{figure}[ht]
    \begin{center}
        \includegraphics[scale=1.0,clip]{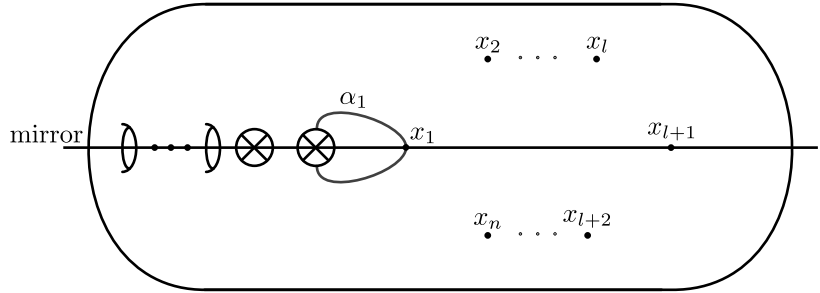}
    \end{center}
    \caption{Involution $K$}
    \label{fig:involution_k_even}
\end{figure}

\begin{lem}
    \label{lem:puncture_slide_generate_even_case}
    Puncture slides $v_i$ and $w_i(i=1,2,\ldots,n)$ are products of involutions $\sigma,\tau$,$K$ and $\rho_2$.
\end{lem}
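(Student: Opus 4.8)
The plan is to mimic the odd‑genus argument of Lemma~\ref{lem:puncture_slide_generate}, handling the new family $w_1,\ldots,w_n$ by one additional symmetry of the surface.

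First I would record that $\rho_2=Kv_1$ is an involution and that $v_1$ is a product of two of the allowed involutions. Since $K$ fixes $\alpha_1$ and reverses its orientation, Lemma~\ref{lemma:Property_puncture_slide} gives $Kv_1K=v_1^{-1}$, so $\rho_2^2=(Kv_1)(Kv_1)=(Kv_1K)v_1=v_1^{-1}v_1=\mathrm{id}$; multiplying $\rho_2=Kv_1$ on the left by $K$ yields $v_1=K\rho_2$.

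Next, from the pictures defining $\sigma$ and $\tau$ (Figures~\ref{fig:reflection_sigma_even} and~\ref{fig:reflection_tau_even}) I would read off the action of $R:=\tau\sigma$ on the one-sided curves based at the punctures, exactly as equation $(4)$ was read off in the odd case: one gets $R(\alpha_i)=\alpha_{i+1}$ for $i=1,\ldots,n-1$, together with the analogous shift on the $\beta_i$. More importantly, I would locate an explicit short word $h$ in $\sigma,\tau,K$ that carries $\alpha_1$ onto $\beta_1$ up to orientation; since $\beta_1$ is based at $x_1$ this automatically forces $h(x_1)=x_1$. Granting these two facts, Lemma~\ref{lemma:Property_puncture_slide} finishes the proof: for $i=2,\ldots,n$ one has $v_i=Rv_{i-1}R^{-1}$, so every $v_i$ is a product of $\sigma,\tau,K,\rho_2$; and $w_1=hv_1h^{-1}$ (or its inverse) is again a product of $\sigma,\tau,K,\rho_2$, whence so is each $w_i$, being a conjugate of $w_1$ by a word in $\sigma,\tau$. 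Equivalently, the only input needed is that the $\langle\sigma,\tau,K\rangle$-orbit of $\alpha_1$ contains all of $\alpha_1,\ldots,\alpha_n$ and $\beta_1,\ldots,\beta_n$.

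The only genuine work is the figure bookkeeping in the previous paragraph: verifying the precise permutation that $R$ induces on the $\alpha_i$ and $\beta_i$ (including the indexing convention when $b=2l$ is even), and exhibiting the word $h$ with $h(\alpha_1)=\beta_1^{\pm1}$ together with the orientation check. Once those are in hand everything is formal, using only Lemmas~\ref{lem:Property_Dehn_Twist} and~\ref{lemma:Property_puncture_slide}. I expect the identification of $h$ to be the main obstacle, since it is the one step with no direct analogue in the odd-genus case.
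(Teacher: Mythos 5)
Your proposal is correct and follows essentially the same route as the paper: write $v_1=K\rho_2$, shift along the $\alpha_i$ by $\tau\sigma$, and transport $v_1$ to the $\beta$-family by a word in $\sigma,\tau$. The step you flag as the main obstacle is handled in the paper by the single observation $\sigma(\alpha_1)=\beta_n$, after which $\sigma\tau$ shifts $\beta_n$ down to the remaining $\beta_i$.
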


\begin{proof}
    Since $v_1$ is equal to $K\rho_2$, we can write $v_1$ as a product of two involutions. Let $S$ and $R$ be products $\tau \sigma$ and $\sigma \tau$, respectively. By the constructions of $\sigma$ and $\tau$, we have
    \[S(\alpha_1) = \alpha_2, S(\alpha_2) = \alpha_3, \ldots, S(\alpha_{n-1}) = \alpha_{n}, \]
    \[ R(\beta_n) = \beta_{n-1}, R(\beta_{n-1}) = \beta_{n-2}, \ldots, R(\beta_2) = \beta_1, \]
    \[ \sigma(\alpha_1) = \beta_n. \]

    By lemma~\ref{lemma:Property_puncture_slide}, we can prove this lemma.
\end{proof}

We will write $y$ as a product of involutions. We consider the diffeomorphism $\Phi : N_{g,n} \rightarrow N_{g,n}$ which  satisfies $\Phi y \Phi^{-1} = Y_{m,a}$ and fixes each punctures as shown Figure.~\ref{fig:diffeomorphism_Phi}.

\begin{figure}[ht]
    \begin{center}
        \includegraphics[scale=1.0,clip]{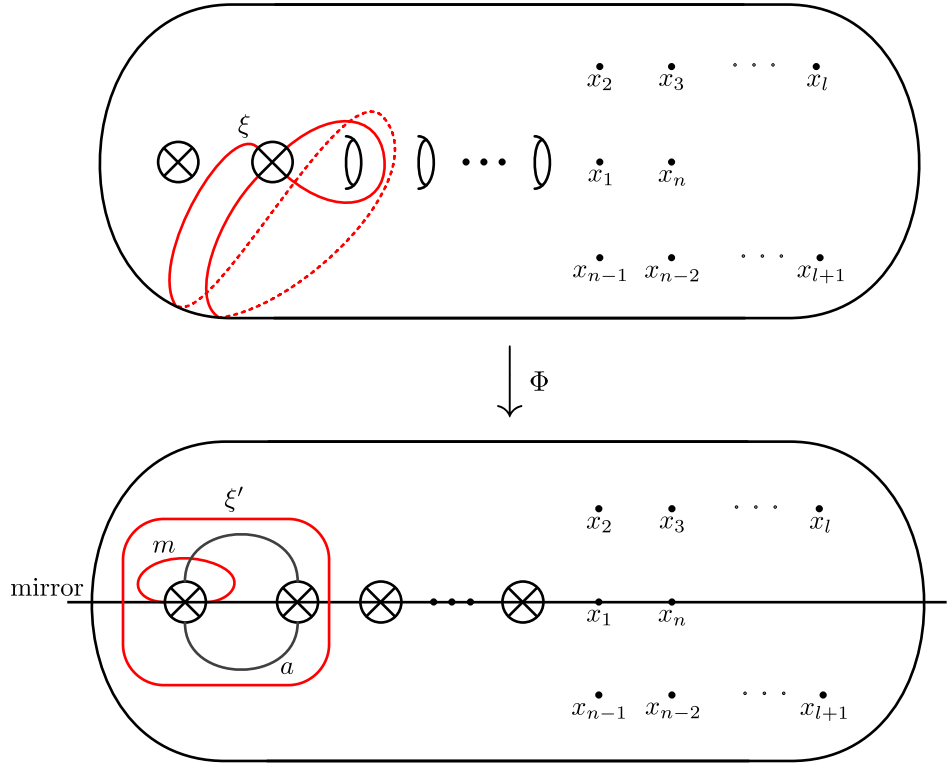}
    \end{center}
    \caption{Diffeomorphism $\Phi$}
    \label{fig:diffeomorphism_Phi}
\end{figure}

Let $\omega$ be reflection in the mirror as shown bottom figure in Figure.~\ref{fig:diffeomorphism_Phi}. Since $\omega$ fixes $m$ and $a$ but reverses the orientation of $m$ and $a$, we can see that $\omega Y_{m,a} \omega = Y_{m,a}^{-1}$. Let $W$ be the product $\Phi^{-1} \omega \Phi$ and let $\rho_3$ be the product $Wy$. We can see that $W$ and $\rho_3$ are involutions.
We can see the following lemma.

\begin{lem}
    \label{lem:y_homeo_generate_even_case}
    The Y-homeomorphism $y$ is the product of involutions $W$ and $\rho_3$.
\end{lem}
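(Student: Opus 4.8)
The plan is to leverage the two facts recorded immediately above the statement, namely that $W=\Phi^{-1}\omega\Phi$ and $\rho_3=Wy$ are both involutions; granted these, the conclusion drops out in a single line, so the substance is in justifying them and then assembling the product.

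First I would observe that $\omega$, being the reflection in the mirror depicted in the bottom part of Figure~\ref{fig:diffeomorphism_Phi}, is an involution, whence $W^2=\Phi^{-1}\omega^2\Phi=1$ and $W$ is an involution. Next, since $\omega$ carries $m$ and $a$ to themselves while reversing their chosen orientations, Lemma~\ref{lemma:Property_Y_homeo} gives $\omega Y_{m,a}\omega=Y_{\omega(m),\omega(a)}=Y_{m^{-1},a^{-1}}=Y_{m,a}^{-1}$. Using the defining property $\Phi y\Phi^{-1}=Y_{m,a}$ of $\Phi$, I would then compute
\[
WyW=\Phi^{-1}\omega(\Phi y\Phi^{-1})\omega\Phi=\Phi^{-1}(\omega Y_{m,a}\omega)\Phi=\Phi^{-1}Y_{m,a}^{-1}\Phi=y^{-1}.
\]
From $WyW=y^{-1}$ and $W^2=1$ one obtains $yW=Wy^{-1}$, hence $\rho_3^2=(Wy)(Wy)=W(yW)y=W(Wy^{-1})y=W^2=1$, so $\rho_3$ is indeed an involution. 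Finally $W\rho_3=W(Wy)=W^2y=y$, exhibiting $y$ as a product of the two involutions $W$ and $\rho_3$.

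The only genuine obstacle here is geometric bookkeeping rather than algebra: one must read off from Figure~\ref{fig:diffeomorphism_Phi} that a diffeomorphism $\Phi$ with $\Phi y\Phi^{-1}=Y_{m,a}$ and fixing every puncture actually exists, and that $\omega$ acts on $m\cup a$ exactly as claimed (fixing each curve setwise, reversing both orientations). Once these pictures are in hand the proof is the short computation above, which runs verbatim as in the odd-genus case treated in Lemma~\ref{lem:y_homeo_generate}.
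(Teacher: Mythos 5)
Your proposal is correct and follows the paper's own route exactly: the paper establishes $\omega Y_{m,a}\omega=Y_{m,a}^{-1}$ from the reflection's action on $m$ and $a$, computes $WyW=\Phi^{-1}(\omega Y_{m,a}\omega)\Phi=y^{-1}$ (written out explicitly in the odd-genus case, Lemma~\ref{lem:y_homeo_generate}), and concludes that $W$ and $\rho_3=Wy$ are involutions with $y=W\rho_3$. Your additional explicit verification that $\rho_3^2=1$ is a welcome detail the paper leaves implicit.
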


Let $G$ be the subgroup of $\CN _{g,n}$ generated by $\sigma,\tau,W,I,J,K,\rho_1,\rho_2,\rho_3,\rho_4$ and $\rho_5$.

\begin{proof}[Proof of Theorem \ref{thm:main} for genus $g=2r+2$]
    We see that $G$ contains $\PM _{g,n}$ since all Korkmaz's generators for $\PM _{g,n}$ are in $G$ by Lemma~\ref{lem:dehn_twist_generate_even_case},~\ref{lem:puncture_slide_generate_even_case} and ~\ref{lem:y_homeo_generate_even_case}.

    When we consider the actions of $\sigma,\tau$ and $W$ on the punctures, we can see that
    \[ \pi (\sigma) = (1,n)(2,n-1)\ldots (l,l+1), \]
    \[ \pi (\tau) = (2,n)(3,n-1)\ldots (l,l+2)(1)(l+1), \]
    \[ \pi (W) = (2,n-1)(3,n-2)\ldots (l,l+1)(1)(n).\]

    By the following lemma, the restriction $\pi \mid _G : G \to \mathrm{Sym}_n$ is a surjection.
    Hence, we can see that $G = \CN _{g,n}$ by Lemma~\ref{lemma:main_tool_of_proof_main_theorem_two}.
\end{proof}

\begin{lem}
    The group $\mathrm{Sym}_n$ is generated by following elements,
    \[ r_1 = (1,n)(2,n-1)\ldots (l,l+1), \]
    \[ r_2 = (2,n)(3,n-1)\ldots (l,l+2)(1)(l+1), \]
    \[ r_3 = (2,n-1)(3,n-2)\ldots (l,l+1)(1)(n). \]
\end{lem}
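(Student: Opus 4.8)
Throughout, $\mathrm{Sym}_n$ acts on $\{1,2,\dots,n\}$ and $n=2l$. My plan is to recognize each $r_i$ as an ``interval reversal'' and then reduce to a classical generation fact for the symmetric group.

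First I would record, by reading off the cycle expressions in the statement, that $r_1$ is the permutation $j\mapsto n+1-j$ (the reversal of the whole block $1,2,\dots,n$); that $r_2$ fixes $1$ and the midpoint $l+1$ and sends $j\mapsto n+2-j$ for $2\le j\le n$ (the reversal of the block $2,3,\dots,n$); and that $r_3$ fixes $1$ and $n$ and sends $j\mapsto n+1-j$ for $2\le j\le n-1$ (the reversal of the block $2,3,\dots,n-1$). For $l\le1$ the group $\mathrm{Sym}_n$ has order at most $2$ and is generated by $r_1$ alone, so I would assume $l\ge2$.

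Next I would compute two products. Checking the effect on each $j$ shows that $r_1 r_2$ is the $n$-cycle $c$ with $c(j)\equiv j-1\pmod n$, i.e.\ $c=(1,n,n-1,\dots,2)$ (with the reverse composition convention one instead gets $c^{-1}$; either way it is an $n$-cycle in which $1$ and $n$ are cyclically adjacent). Similarly, since $r_1$ and $r_3$ restrict to the same reversal on $\{2,\dots,n-1\}$ while $r_1$ swaps $1$ with $n$ and $r_3$ fixes both, the product $r_1 r_3$ fixes $2,\dots,n-1$ and swaps $1$ and $n$; hence $r_1 r_3=(1,n)$. Thus $\langle r_1,r_2,r_3\rangle$ contains the $n$-cycle $c$ and the transposition $(1,n)$ of two points that are consecutive in the cyclic order of $c$.

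Finally I would invoke the classical fact that such an $n$-cycle together with such a transposition generate $\mathrm{Sym}_n$: conjugating $(1,n)$ by the successive powers of $c$ yields all the adjacent transpositions $(1,2),(2,3),\dots,(n-1,n)$, which generate $\mathrm{Sym}_n$; therefore $\langle r_1,r_2,r_3\rangle=\mathrm{Sym}_n$. I expect no genuine difficulty here --- the only step needing care is the product computation, where one must track the fixed points of each reversal precisely so that $r_1 r_2$ comes out as an honest $n$-cycle and $r_1 r_3$ as exactly $(1,n)$, rather than as some longer permutation. The same scheme, with $r_1,r_2,r_3$ replaced by the corresponding interval reversals of $\{1,\dots,n\}$ when $n$ is odd, proves the analogous lemma in the odd-genus case.
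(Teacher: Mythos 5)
Your proof is correct. Note that the paper states this lemma without any proof at all, so there is nothing to compare your argument against; your identification of $r_1$, $r_2$, $r_3$ as the reversals of the blocks $\{1,\dots,n\}$, $\{2,\dots,n\}$, $\{2,\dots,n-1\}$ respectively, the computations $r_1r_2=(1,n,n-1,\dots,2)$ and $r_1r_3=(1,n)$, and the appeal to the standard fact that an $n$-cycle together with a transposition of two cyclically adjacent points generates $\mathrm{Sym}_n$ constitute a complete and correct proof (and, as you say, the same reversal trick handles the odd-genus analogue).
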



\section*{Acknowledgments}
The author would like to thank Susumu Hirose, Noriyuki Hamada, and Naoyuki Monden for helpful comments and invaluable advice on the mapping class groups of surfaces. He would also like to thank Professor Osamu Saeki for many helpful suggestions and comments.



\end{document}